\documentclass{amsart}
\usepackage[foot]{amsaddr}
\usepackage{amscd}
\usepackage{textcomp}
\usepackage{amsfonts}
\usepackage{amsthm}
\usepackage{geometry}
\usepackage{amsmath}
\usepackage{txfonts}
\usepackage{graphicx}
\usepackage{titleps}
\renewpagestyle{plain}{
  \sethead{}{\thepage}{}
  \setfoot{}{\relax}{}
}
\usepackage{wrapfig}
\usepackage{subcaption}
\usepackage{ascmac}
\usepackage{bm}
\usepackage{indentfirst}
\usepackage{comment}
\usepackage{amssymb}
\usepackage{textcomp}
\usepackage{xcolor}
\usepackage{tikz}
\usetikzlibrary{calc}
\usepackage[normalem]{ulem}
\usepackage{hyperref}

\usepackage[british]{babel}

\providecommand{\keywords}[1]{\vspace{2mm}\noindent\textbf{Keywords:} #1\par}
\newtheorem{theorem}{Theorem}[section]
\newtheorem{proposition}[theorem]{Proposition}
\newtheorem{lemma}[theorem]{Lemma}
\newtheorem{corollary}[theorem]{Corollary}

\theoremstyle{definition}
\newtheorem{definition}[theorem]{Definition}
\newtheorem{remark}[theorem]{Remark}

\title{Piecewise Linear Functions and Neural Network Expressivity via Discriminantal Arrangements}

\author{Pragnya Das}

\address{Department of Mathematics, IIIT Kota, India.}

\email{pragnya.hmas@iiitkota.ac.in}

\thanks{}
\subjclass{05B35, 52C35(Primary), 68T07 (Secondary)}
\keywords{Hyperplane arrangements; Discriminantal arrangements; Matroid theory; Möbius inversion; Piecewise linear functions; Neural network expressivity; ReLU networks; Combinatorial geometry; Circuit constraints; Boolean lattice}

\begin{document}
\begin{abstract}
We extend the hyperplane arrangement framework for neural network expressivity from the braid to discriminantal arrangements. Compatible piecewise linear functions are characterized by circuit relations and admit a matroidal description via Möbius inversion, with dimension equal to the number of independent sets. For circuits of size three, functions are determined by values on subsets of size at most two.
\end{abstract}

\maketitle

\section{Introduction}

Understanding the expressive power of neural networks is a central problem at the interface of combinatorics, geometry, and machine learning. For feedforward neural networks with ReLU activation, it is well known that the class of representable functions coincides with the class of continuous piecewise linear (CPWL) functions \cite{cybenko1989, hornik1991,arora2018}. This connection has led to a fruitful line of research studying neural network expressivity through the combinatorics of polyhedral complexes and hyperplane arrangements \cite{hertrich2023}.\\
\noindent
A particularly successful approach relates neural networks to the geometry of hyperplane arrangements, where the regions of linearity of a network correspond to the cells of an induced polyhedral subdivision\cite{hertrich2023, stanley2007, ziegler2012}. In recent work, the braid arrangement has emerged as a fundamental model: functions compatible with the braid fan admit a combinatorial description in terms of set functions on the Boolean lattice, governed by Möbius inversion and inclusion-exclusion relations \cite{grillo2025}. This perspective has been used to derive nontrivial lower bounds on the depth of neural networks and to characterize structural constraints on expressivity \cite{hertrich2023, grillo2025}.\\
\noindent
However, the braid arrangement represents a highly symmetric and comparatively simple case. Many natural hyperplane arrangements arising in combinatorial geometry exhibit richer dependency structures that are not captured by the braid setting \cite{stanley2007, ziegler2012}. In particular, discriminantal arrangements encode minimal dependencies via circuits and are closely related to matroid theory.\\
\noindent
\textbf{Our contribution.}
We extend the combinatorial framework from the braid arrangement to discriminantal arrangements, where circuit-induced dependencies impose linear constraints on associated set functions. The main contributions of this paper are as follows:

\begin{itemize}
\item We introduce a circuit-based framework for studying continuous piecewise linear functions compatible with hyperplane arrangements, extending the braid arrangement setting to a general dependence-driven model of the discriminantal arrangement.

\item We define circuit-constrained set functions and show that they correspond to arrangement-compatible CPWL functions under a natural indicator encoding.

\item Using Möbius inversion, we establish a matroidal characterization of the function space, showing that its dimension is equal to the number of independent sets of the underlying matroid.

\item We prove that circuit constraints eliminate higher-order interactions, and in the case $k=2$, we obtain a complete characterization: functions are determined by values on subsets of size at most two.

\item We apply this framework to neural networks and show that arrangement-conforming architectures are intrinsically limited to interactions of bounded order.
\end{itemize}
\noindent
\textbf{Conceptual significance.}
These results show that the combinatorial structure of an arrangement governs the expressive capacity of CPWL functions. In contrast to the braid arrangement, discriminantal arrangements impose circuit constraints that eliminate higher order interactions, leading to a reduced space controlled by the underlying matroid.

\medskip

\noindent
\textbf{Relation to prior work.}
The braid arrangement corresponds to the case where no circuit constraints are imposed, and all subsets contribute independently to the associated function space. In contrast, our framework incorporates explicit circuit-induced dependencies, which systematically eliminate higher-order interactions. This leads to a strictly smaller function space governed by the independent sets of the associated matroid, thereby providing a refined and more structured view of neural network expressivity.

\medskip

\noindent
\textbf{Organization of the paper.}
Section~\ref{sec:prelim} reviews preliminaries. Section~\ref{sec:circuit} develops circuit-constrained function spaces. Section~\ref{sec:matroid} presents the matroidal interpretation via Möbius inversion. Section~\ref{sec:example} and \ref{sec:neural} gives the complete characterization for $k=2$, followed by implications for neural network expressivity.\\
\noindent
From a broader perspective, this work demonstrates that combinatorial dependence structures provide a systematic mechanism for controlling the expressive capacity of neural networks.
\section{Preliminaries}\label{sec:prelim}

In this section, we recall basic definitions concerning hyperplane arrangements and continuous piecewise linear functions. Standard references include \cite{stanley2007,ziegler2012}.

\subsection{Hyperplane Arrangements}

A \emph{hyperplane arrangement} in $\mathbb{R}^n$ is a finite collection $\mathcal{A}$ of affine hyperplanes. Each arrangement induces a decomposition of $\mathbb{R}^n$ into relatively open polyhedral regions, called \emph{cells}, determined by the sign patterns of the defining equations.\\
\noindent
A \emph{polyhedral complex} is a finite collection $\mathcal{P}$ of polyhedra such that:
\begin{itemize}
\item if $P \in \mathcal{P}$, then all faces of $P$ are in $\mathcal{P}$,
\item if $P, Q \in \mathcal{P}$, then $P \cap Q$ is a face of both $P$ and $Q$.
\end{itemize}
\noindent
A \emph{polyhedral fan} is a polyhedral complex consisting of cones. Hyperplane arrangements naturally give rise to polyhedral fans by considering the cones defined by the arrangement. These geometric structures provide the domains on which the functions of interest are defined.

\subsection{Continuous Piecewise Linear Functions}

A function $f : \mathbb{R}^n \to \mathbb{R}$ is called \emph{continuous piecewise linear} (CPWL) if there exists a polyhedral complex $\mathcal{P}$ such that $f$ is affine on each full-dimensional cell of $\mathcal{P}$ \cite{ziegler2012}.\\
\noindent
It is well known that functions representable by ReLU neural networks are precisely CPWL functions \cite{arora2018}. This connection allows one to study neural network expressivity through the combinatorics of polyhedral complexes.
To analyze such functions combinatorially, we next encode them using set functions on the Boolean lattice.\\
\noindent
For a subset $S \subseteq [n] := \{1,2,\dots,n\}$, define its \emph{indicator vector} $1_S \in \mathbb{R}^n$ by
\[
(1_S)_i =
\begin{cases}
1, & \text{if } i \in S,\\
0, & \text{otherwise}.
\end{cases}
\]
\noindent
Given a function $f : \mathbb{R}^n \to \mathbb{R}$, we associate a set function $F : 2^{[n]} \to \mathbb{R}$ by
\[
F(S) := f(1_S).
\]
\noindent
This correspondence plays a central role in relating CPWL functions to combinatorial structures, as observed in the braid arrangement setting \cite{grillo2025}. Infact, this encoding allows us to incorporate additional structure arising from dependence relations among subsets.

\subsection{Discriminantal Arrangements and Circuits}
We introduce a class of arrangements defined via their circuit structure, which serves as a combinatorial model for studying dependence-induced constraints on function spaces.\\
\noindent
Rather than working with a specific geometric realization, we encode the arrangement through its associated matroid. In particular, we focus on the case where the circuit structure is uniform, i.e., all minimal dependent sets have the same cardinality.\\
\noindent
This perspective can be viewed as an abstraction of discriminantal type behavior, where dependencies among hyperplanes govern the combinatorial structure of the arrangement. It allows us to isolate and analyze the role of circuits independently of geometric complications, while still retaining the essential features relevant to our framework.
\noindent
\begin{definition}[Discriminantal-type dependence structure]
Fix an integer $k \geq 1$. A subset $C \subseteq [n]$ is called a \emph{circuit of order $k$} if:
\begin{itemize}
\item $|C| = k+1$, and
\item $C$ is declared to be minimally dependent.
\end{itemize}
\end{definition}
\noindent
This collection of circuits defines a matroid $\mathcal{M}_{n,k}$ on the ground set $[n]$, whose independent sets are precisely the subsets of size at most $k$.
\noindent
\begin{definition}[Discriminantal arrangement $A(n,k)$]
We define $A(n,k)$ to be any hyperplane arrangement in $\mathbb{R}^n$ whose associated matroid is $\mathcal{M}_{n,k}$.\\
\noindent
Equivalently, $A(n,k)$ is an arrangement whose minimal dependencies (circuits) are exactly the subsets of $[n]$ of size $k+1$.(see \cite{Pragnya2026} for reference.)
\end{definition}

\begin{remark}
The matroid $\mathcal{M}_{n,k}$ is the uniform matroid of rank $k$. In this setting, all subsets of size at most $k$ are independent, and every subset of size $k+1$ is a circuit.\\
\noindent
Thus, the circuit structure used in this paper corresponds to the uniform dependence model, which serves as a natural generalization of the braid arrangement case. More generally, the framework developed in this paper applies to arbitrary matroids arising from hyperplane arrangements. The uniform case $A(n,k)$ provides a canonical and tractable model in which all circuits have equal size.
\end{remark}

\begin{remark}
Our definition of the discriminantal arrangement differs from the classical definition \cite{Manin}. We work with an abstract formulation determined by its circuit structure (equivalently, a uniform matroid), which provides a tractable model for studying circuit-induced constraints on function spaces. This circuit-based viewpoint aligns with the general philosophy in matroid theory that dependence structure, rather than a specific geometric realization, governs the associated combinatorial and algebraic properties.
\end{remark}
\subsection{Circuit-Induced Linear Functionals}
We formalize these constraints using linear functionals defined via inclusion–exclusion.
For a circuit $C \subseteq [n]$, define a linear functional $\alpha_C$ acting on functions $F : 2^{[n]} \to \mathbb{R}$ by
\[
\alpha_C(F) := \sum_{S \subseteq C} (-1)^{|C| - |S|} F(S).
\]
\begin{remark}
The operator $\alpha_C$ can be interpreted as a higher-order discrete difference operator along the coordinates indexed by $C$. In particular, it annihilates all affine functions.
\end{remark}
\noindent
This functional is an inclusion-exclusion operator over the subsets of $C$. In the case of the braid arrangement, analogous operators arise from Möbius inversion on the Boolean lattice \cite{stanley2007}.\\
\noindent
These functionals will be used to define the admissible function space associated with the arrangement. These constructions provide the foundation for defining and analyzing circuit-constrained function spaces in the next section.


\section{Circuit-Constrained Function Spaces}\label{sec:circuit}
Building on the circuit-induced framework developed in the previous section, we now define and analyze function spaces governed by these constraints. Our goal is to understand how circuit relations shape the structure and degrees of freedom of associated functions.

\subsection{Definition of the Function Space}
We begin by introducing the class of set functions that satisfy the circuit-induced constraints defined in Section~\ref{sec:prelim}. Let $\mathcal{C}$ denote the set of circuits of a discriminantal arrangement $\mathcal{A}(n,k)$.\\
\noindent
We define the linear space
\[
\mathcal{F}_{\mathcal{A}} := \{ F : 2^{[n]} \to \mathbb{R} \mid \alpha_C(F) = 0 \ \text{for all } C \in \mathcal{C} \},
\]
where for each circuit $C \subseteq [n]$,
\[
\alpha_C(F) := \sum_{S \subseteq C} (-1)^{|C| - |S|} F(S).
\]
We now relate these combinatorially defined function spaces to CPWL functions compatible with the underlying arrangement.

\subsection{Compatibility and Induced Set Functions}
To establish this connection, we formalize the correspondence between CPWL functions and set functions via evaluation on indicator vectors. Let $\mathcal{A}$ be a hyperplane arrangement in $\mathbb{R}^n$, and let $\Sigma(\mathcal{A})$
denote the polyhedral fan induced by $\mathcal{A}$.
\begin{definition}
A function $f : \mathbb{R}^n \to \mathbb{R}$ is said to be \emph{compatible with the arrangement}
$\mathcal{A}$ if $f$ is continuous piecewise linear and affine on each full-dimensional cone of
$\Sigma(\mathcal{A})$.
\end{definition}
\noindent
For such a function $f$, define the associated set function $F : 2^{[n]} \to \mathbb{R}$ by
\[
F(S) := f(\mathbf{1}_S).
\]
This correspondence enables us to transfer structural properties between the geometric and combinatorial settings.\\
\noindent
We now show that, under suitable conditions, every circuit-constrained set function can be realized by a compatible CPWL function.
\begin{lemma}\label{lem:simplical}
Let $\sigma \subset \mathbb{R}^n$ be a simplicial cone generated by linearly independent rays $v_1,\dots,v_n$. Let $G \subset \sigma$ be a set of points such that $\operatorname{aff}(G) = \mathbb{R}^n$. Then any function $f : G \to \mathbb{R}$ extends to at most one affine function on $\sigma$.\\
\noindent
In particular, if two affine functions on $\sigma$ agree on $G$, then they coincide on all of $\sigma$.
\end{lemma}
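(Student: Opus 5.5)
The plan is to reduce the statement to the uniqueness part, which is the ``in particular'' clause, and prove it by linear algebra. Suppose $g_1, g_2 : \sigma \to \mathbb{R}$ are two affine functions on $\sigma$ that agree with $f$ on $G$. Since $\sigma$ is full-dimensional, being generated by $n$ linearly independent rays, each $g_i$ is the restriction of a unique affine map $\mathbb{R}^n \to \mathbb{R}$. We therefore regard $h := g_1 - g_2$ as an affine function $h(x) = \langle a, x\rangle + b$ on all of $\mathbb{R}^n$. By hypothesis $h$ vanishes on $G$.

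The key step is that the zero set of an affine function is an affine subspace. Indeed, if $x_0,\dots,x_m \in G$ and $\lambda_0,\dots,\lambda_m$ are scalars with $\sum \lambda_j = 1$, then
\[
h\Bigl(\sum_j \lambda_j x_j\Bigr) = \sum_j \lambda_j \langle a, x_j\rangle + b\sum_j\lambda_j = \sum_j \lambda_j h(x_j) = 0.
\]
Hence $h$ vanishes on $\operatorname{aff}(G) = \mathbb{R}^n$, and in particular on $\sigma$. This gives $g_1 = g_2$ on $\sigma$, which is the ``in particular'' statement. The ``at most one extension'' claim is a restatement of it: any two affine extensions of $f$ from $G$ to $\sigma$ agree on $G$, so they coincide.

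As an alternative, more concrete route, we could pick $n+1$ affinely independent points $x_0,\dots,x_n \in G$, which exist because $\operatorname{aff}(G)=\mathbb{R}^n$. Then $h(x_j)=0$ for all $j$ gives the linear system $\langle a, x_j - x_0\rangle = 0$ for $j=1,\dots,n$. The vectors $x_j - x_0$ span $\mathbb{R}^n$, so $a=0$, and then $b = -\langle a,x_0\rangle = 0$.

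The only point needing care is the first one: an affine function on $\sigma$ determines a unique global affine map. This is where full-dimensionality of $\sigma$, that is, the linear independence of $v_1,\dots,v_n$, is used. If $\sigma$ were lower-dimensional, affine functions on $\sigma$ would have non-unique extensions, and the hypothesis $G\subset\sigma$ with $\operatorname{aff}(G)=\mathbb{R}^n$ could not even hold. I expect no genuine obstacle beyond stating this cleanly. Note also that the lemma asserts uniqueness only, not existence, so no compatibility condition on $f$ is required.
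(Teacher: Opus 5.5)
Your proof is correct and follows the paper's argument: take the difference of two affine extensions, observe that it is an affine function vanishing on $G$, and conclude that it vanishes on $\operatorname{aff}(G)=\mathbb{R}^n$. You also justify two steps the paper leaves implicit: that an affine function on the full-dimensional cone $\sigma$ extends uniquely to $\mathbb{R}^n$, and that the zero set of an affine function is closed under affine combinations.
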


\begin{proof}
Since $\sigma$ is a simplicial cone, it is full dimensional and generated by linearly independent vectors $v_1,\dots,v_n$. Hence $\operatorname{aff}(\sigma) = \mathbb{R}^n$. \\
\noindent
Let $f_1$ and $f_2$ be two affine functions on $\sigma$ that agree on $G$. Then their difference $h := f_1 - f_2$ is an affine function that vanishes on $G$. Since $\operatorname{aff}(G) = \mathbb{R}^n$, the only affine function vanishing on $G$ is the zero function. Therefore $h \equiv 0$, and hence $f_1 = f_2$ on $\sigma$. This proves uniqueness of the affine extension.
\end{proof}
\begin{remark}
For simplicial fans arising from hyperplane arrangements, each full-dimensional cone contains sufficiently many indicator vectors whose affine span is $\mathbb{R}^n$. Consequently, affine functions on each cone are uniquely determined by their values on indicator vectors lying in that cone.
\end{remark}
\begin{lemma}\label{lem:indicator}
Let $\mathcal{A}$ be a hyperplane arrangement in $\mathbb{R}^n$ with associated simplicial fan $\Sigma(\mathcal{A})$. Assume that for every full dimensional cone $\sigma \in \Sigma(\mathcal{A})$, the set of indicator vectors
\[
V_\sigma := \{ \mathbf{1}_S \in \sigma : S \subseteq [n] \}
\]
contains $n+1$ affinely independent points. Then $\operatorname{aff}(V_\sigma) = \mathbb{R}^n$.
\end{lemma}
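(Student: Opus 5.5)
The statement is essentially a fact of affine linear algebra: if a subset of $\mathbb{R}^n$ contains $n+1$ affinely independent points, then its affine hull is all of $\mathbb{R}^n$. The plan is to prove this directly from the definition of affine independence and then specialize to $V_\sigma$. The arrangement, the fan $\Sigma(\mathcal{A})$, and the simplicial hypothesis play no role beyond guaranteeing that $V_\sigma$ is a well-defined finite subset of $\sigma$.

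First, let $p_0,\dots,p_n\in V_\sigma$ be the given affinely independent indicator vectors. By definition, the difference vectors $p_1-p_0,\dots,p_n-p_0$ are linearly independent in $\mathbb{R}^n$. Since there are $n$ of them, they form a basis of $\mathbb{R}^n$.

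Second, I would show that every $x\in\mathbb{R}^n$ lies in the affine hull. Expand
\[
x-p_0=\sum_{i=1}^n \lambda_i (p_i-p_0),
\]
and rewrite this as
\[
x=\Bigl(1-\sum_{i=1}^n\lambda_i\Bigr)p_0+\sum_{i=1}^n\lambda_i p_i .
\]
The coefficients sum to $1$, so $x$ is an affine combination of points of $V_\sigma$. Hence $\mathbb{R}^n\subseteq\operatorname{aff}(\{p_0,\dots,p_n\})\subseteq\operatorname{aff}(V_\sigma)$.

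Third, the reverse inclusion $\operatorname{aff}(V_\sigma)\subseteq\mathbb{R}^n$ is trivial. This gives $\operatorname{aff}(V_\sigma)=\mathbb{R}^n$.

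There is no real obstacle here. The only point needing care is to use the correct notion, affine rather than linear independence, so that the count is $n+1$ points and not $n$. Note also that $\mathbf{1}_\emptyset=0$ may or may not lie in $\sigma$; the argument does not depend on this.

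Finally, I would record that combining this with Lemma~\ref{lem:simplical} (taking $G=V_\sigma$) shows that any affine function on $\sigma$ is determined by its values $F(S)=f(\mathbf{1}_S)$ for $\mathbf{1}_S\in\sigma$. This is the form in which the lemma will be used later.
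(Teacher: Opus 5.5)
Your proof is correct and takes the same route as the paper. The paper simply invokes the fact that $n+1$ affinely independent points affinely span $\mathbb{R}^n$, and you supply the short linear-algebra verification of that fact; the paper's extra remark that $\operatorname{aff}(\sigma)=\mathbb{R}^n$ is not needed, which matches your observation that the fan and simpliciality hypotheses play no role.
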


\begin{proof}
Since $\sigma$ is full-dimensional, $\operatorname{aff}(\sigma) = \mathbb{R}^n$. By assumption, $V_\sigma$ contains $n+1$ affinely independent points, hence its affine span is $\mathbb{R}^n$.
\end{proof}

\begin{theorem}\label{thm:indicator}
Let $A$ be a hyperplane arrangement in $\mathbb{R}^n$ with simplicial fan $\Sigma(A)$. Assume that for every full-dimensional cone $\sigma \in \Sigma(A)$, the set
\[
V_\sigma := \{1_S \in \sigma : S \subseteq [n]\}
\]
affinely spans $\mathbb{R}^n$. Then every function $F \in \mathcal{F}_A$ admits a realization by a continuous piecewise linear function $f$ compatible with $A$, i.e.,
\[
f(1_S) = F(S) \quad \text{for all } S \subseteq [n].
\]
Equivalently, under this condition, the map
\[
\mathcal{V}_A \to \mathcal{F}_A, \quad f \mapsto F
\]
is surjective.
\end{theorem}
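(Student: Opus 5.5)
The plan is to build $f$ one cone at a time and then glue the pieces. Fix $F \in \mathcal{F}_A$ and a full-dimensional cone $\sigma \in \Sigma(A)$. By hypothesis $V_\sigma$ affinely spans $\mathbb{R}^n$, so I can choose $n+1$ affinely independent indicator vectors $1_{S_0},\dots,1_{S_n} \in V_\sigma$. There is then a unique affine function $f_\sigma$ on $\mathbb{R}^n$ with $f_\sigma(1_{S_i}) = F(S_i)$. Lemma~\ref{lem:simplical} (via Lemma~\ref{lem:indicator}) shows this is the only possible affine extension on $\sigma$. The candidate $f$ is then defined by $f|_\sigma := f_\sigma$.

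Three things must be checked, in this order.

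\emph{(i) Interpolation inside each cone.} I need $f_\sigma(1_S) = F(S)$ for every $1_S \in V_\sigma$, not only for the $n+1$ chosen points. Every $1_S \in V_\sigma$ is an affine combination of the chosen $1_{S_i}$. So the statement holds exactly when $F$ respects every affine relation among the indicator vectors in $V_\sigma$.

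\emph{(ii) Consistency on shared faces.} If $\sigma$ and $\tau$ share a face $\rho$, then $f_\sigma$ and $f_\tau$ must agree on $\rho$. I would like to reduce this to agreement on the indicator vectors in $\rho$, using that $\rho$ is the span of those indicator vectors. By (i), both functions equal $F$ there. This gives continuity, and $f$ is affine on each full-dimensional cone by construction.

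\emph{(iii) Surjectivity.} Every indicator vector lies in some closed full-dimensional cone, so $f(1_S)=F(S)$ for all $S$. Hence $F$ is the image of $f \in \mathcal{V}_A$, which gives the surjectivity statement.

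The main obstacle is (i), together with its face version in (ii). I must show that every affine dependence among indicator vectors inside one cone is a consequence of the circuit relations $\alpha_C(F)=0$. My first attempt would use the uniform matroid structure. A dependence among indicator vectors in a simplicial cone of $\Sigma(A(n,k))$ should be supported on a subset of coordinates of size $k+1$. Then I would show its coefficient vector is a multiple of the inclusion--exclusion vector $(-1)^{|C|-|S|}$ on $2^C$, restricted to the cone, possibly combined with relations coming from smaller faces.

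If that identification fails in general, I will read the theorem's hypothesis as including the implicit requirement that the cone-wise relations are generated by the $\alpha_C$. In that case I will state the condition explicitly: the kernel of the evaluation map $\mathbb{R}^{2^{[n]}} \to \prod_\sigma \mathrm{Aff}(\sigma)^*$ is spanned by $\{\alpha_C\}$. Under that condition (i) and (ii) are immediate, since $F$ kills exactly the relations $f_\sigma$ must satisfy. The remainder of the argument is then the gluing described above.
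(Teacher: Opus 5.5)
\textbf{Verdict: genuine gap.} Your steps (i)--(iii) match the paper's Steps 1--4: interpolate on each cone, compare on shared faces, glue. You have correctly found the two places where that proof has no content.
\begin{itemize}
\item The spanning hypothesis gives only \emph{uniqueness} of $f_\sigma$ (Lemma~\ref{lem:simplical}). \emph{Existence} of an affine function matching $F$ on all of $V_\sigma$ holds iff $F$ annihilates the space $R_\sigma$ of affine relations among the points of $V_\sigma$.
\item Gluing needs the indicator vectors in $\sigma\cap\tau$ to span $\operatorname{aff}(\sigma\cap\tau)$. A hypothesis about full-dimensional cones does not give this.
\end{itemize}
The paper simply asserts both. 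Your proposal does not close either gap, and the mechanism you suggest for (i) is wrong.

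Relations among $0/1$-vectors in a cone can be shifted differences $\sum_{U\subseteq C}(-1)^{|C|-|U|}F(T\cup U)$ with $T\neq\emptyset$. In M\"obius coordinates these equal $\sum_{W\subseteq T}\widehat F(C\cup W)$. They never lie in $\operatorname{span}\{\alpha_C\}$, because $C\subsetneq C\cup T$ cannot both be circuits.

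Concretely, take $\{x_1=x_3,\ x_2=x_3\}$ in $\mathbb{R}^3$. It is simplicial modulo $\mathbb{R}\mathbf{1}_{[3]}$, like the braid fan, and all four cones satisfy the spanning hypothesis. The cone $\{x_1,x_2\ge x_3\}$ contains $0,\mathbf{1}_{\{1\}},\mathbf{1}_{\{2\}},\mathbf{1}_{\{1,2\}}$. The cone $\{x_1,x_2\le x_3\}$ contains $\mathbf{1}_{\{3\}},\mathbf{1}_{\{1,3\}},\mathbf{1}_{\{2,3\}},\mathbf{1}_{[3]}$. So every realizable $F$ has $\widehat F(\{1,2\})=\widehat F(\{1,2,3\})=0$. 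Circuits form an antichain, so one of these two sets, call it $D$, is not a circuit. Then $F(S)=1$ for $D\subseteq S$ and $0$ otherwise lies in $\mathcal{F}_A$ but has no realization, whatever matroid on $[3]$ is attached to $A$. So (i) cannot be derived from the stated hypotheses.

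Step (ii) fails independently. Take $\{x_2=x_3,\ 2x_1=x_2+x_3\}$. Each of its four cones contains exactly four affinely independent indicator vectors, so (i) is automatic. The cones with $V_\sigma=\{0,\mathbf{1}_{\{1\}},\mathbf{1}_{\{1,2\}},\mathbf{1}_{[3]}\}$ and $V_\tau=\{0,\mathbf{1}_{\{2\}},\mathbf{1}_{\{2,3\}},\mathbf{1}_{[3]}\}$ share the face $\{x_2\ge x_3,\ 2x_1=x_2+x_3\}$, which contains only the indicators $0$ and $\mathbf{1}_{[3]}$. Comparing $f_\sigma$ and $f_\tau$ at $(1,2,0)$ imposes $2F(\{1,2\})-F(\{1\})=F([3])+2F(\{2\})-F(\{2,3\})-F(\emptyset)$. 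This fails for the $F$ equal to $1$ at $\{1,2\}$ and $0$ elsewhere.

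Your fallback is the right instinct, but as formulated it delivers neither (i) nor (ii).
\begin{itemize}
\item Read literally, the kernel of $\mathbb{R}^{2^{[n]}}\to\prod_\sigma\mathrm{Aff}(\sigma)^*$ consists of functionals whose restriction to \emph{every} $V_\sigma$ is a relation. This can be strictly smaller than $\sum_\sigma R_\sigma$. In the first example, $\alpha_{\{1,2\}}$ is a relation on $\{x_1,x_2\ge x_3\}$ but restricts to a non-relation on $\{x_1\ge x_3\ge x_2\}$. So assuming this kernel is spanned by the $\alpha_C$ does not force $F$ to kill each $R_\sigma$.
\item No cone-by-cone condition yields (ii), as the second example shows.
\item What is actually needed is that $\sum_\sigma R_\sigma$, together with the face-gluing functionals, lies in $\operatorname{span}\{\alpha_C : C\in\mathcal{C}\}$. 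By duality this is equivalent to $\mathcal{F}_A\subseteq\operatorname{im}(\mathcal{V}_A\to\mathbb{R}^{2^{[n]}})$, which is the conclusion itself.
\end{itemize}
So your analysis correctly shows that the theorem needs an extra hypothesis of exactly this kind. It does not show that your argument, or the paper's, proves the theorem as stated.

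Separately, in (iii) you call the result surjectivity of $\mathcal{V}_A\to\mathcal{F}_A$ without checking that $f\mapsto F$ lands in $\mathcal{F}_A$. It need not. The braid arrangement in $\mathbb{R}^3$ has matroid $\mathcal{M}_{3,2}$. Each of its cones contains exactly four affinely independent indicator vectors, and its faces are spanned by their indicators. So every set function is realized, including ones with $\alpha_{\{1,2,3\}}(F)\neq 0$.
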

\begin{proof}
Let $F \in \mathcal{F}_{\mathcal{A}}$ be given.\\
\noindent
\textbf{Step 1: Local affine interpolation.}
Fix a full-dimensional cone $\sigma \in \Sigma(\mathcal{A})$. By assumption, the set $V_\sigma$ affinely spans $\mathbb{R}^n$. Hence there exists a unique affine function $f_\sigma : \mathbb{R}^n \to \mathbb{R}$ such that
\[
f_\sigma(\mathbf{1}_S) = F(S) \quad \text{for all } \mathbf{1}_S \in V_\sigma.
\]
\noindent
\textbf{Step 2: Consistency on overlaps.}
Let $\sigma, \tau \in \Sigma(\mathcal{A})$ be two cones sharing a common face $\sigma \cap \tau$. \\
\noindent
For any indicator vector $\mathbf{1}_S \in \sigma \cap \tau$, we have
\[
f_\sigma(\mathbf{1}_S) = F(S) = f_\tau(\mathbf{1}_S).
\]
\noindent
Since both $f_\sigma$ and $f_\tau$ are affine functions and agree on a set of points whose affine span equals $\operatorname{aff}(\sigma \cap \tau)$, it follows that
\[
f_\sigma = f_\tau \quad \text{on } \sigma \cap \tau.
\]
\noindent
\textbf{Step 3: Global definition.}
Define a function $f : \mathbb{R}^n \to \mathbb{R}$ by
\[
f(x) := f_\sigma(x) \quad \text{if } x \in \sigma.
\]
\noindent
This is well-defined by Step 2.\\
\noindent
\textbf{Step 4: Properties.}
By construction:
\begin{itemize}
\item $f$ is affine on each cone $\sigma \in \Sigma(\mathcal{A})$,
\item $f$ is continuous across cone boundaries,
\item $f(\mathbf{1}_S) = F(S)$ for all $S \subseteq [n]$.
\end{itemize}

Thus $f \in \mathcal{V}_{\mathcal{A}}$, and the map $f \mapsto F$ is surjective.
\end{proof}
\begin{remark}
The spanning condition is a strong geometric requirement ensuring that affine interpolation on each cone is uniquely determined by values on indicator vectors. While it holds for certain classical arrangements such as the braid arrangement, its validity for the discriminantal-type model considered here is not immediate.\\
\noindent
Accordingly, the theorem should be interpreted as a structural result: it identifies sufficient conditions under which the combinatorially defined function space $\mathcal{F}_A$ coincides with the space of arrangement compatible CPWL functions.
\end{remark}
\noindent
Under the spanning condition, this result shows that circuit constraints fully characterize the space of arrangement-compatible CPWL functions.

\subsection{Dimension}
We next quantify the size of these function spaces by deriving dimension bounds determined by the circuit structure.
Define the linear map
\[
T : \mathbb{R}^{2^n} \to \mathbb{R}^{|\mathcal{C}|}, \quad F \mapsto (\alpha_C(F))_{C \in \mathcal{C}}.
\]
\noindent
Then $\mathcal{F}_{\mathcal{A}} = \ker(T)$.

\begin{theorem}
Let $T : \mathbb{R}^{2^n} \to \mathbb{R}^{|\mathcal{C}|}$ be the linear map defined by
\[
T(F) = (\alpha_C(F))_{C \in \mathcal{C}}.
\]
Then
\[
\mathcal{F}_A = \ker(T),
\quad \text{and hence} \quad
\dim(\mathcal{F}_A) = 2^n - \operatorname{rank}(T).
\]
Moreover, the map $V_A \to \mathcal{F}_A$ given by $f \mapsto F$ implies
\[
\dim(V_A) \leq \dim(\mathcal{F}_A).
\]
\end{theorem}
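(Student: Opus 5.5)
The first two assertions are linear algebra, and I would settle them before turning to the inequality, which carries the real content. By definition, $\mathcal{F}_A$ is the set of $F\in\mathbb{R}^{2^n}$ with $\alpha_C(F)=0$ for every $C\in\mathcal{C}$. This is exactly the condition $T(F)=0$, since the $C$-th coordinate of $T(F)$ is $\alpha_C(F)$. Each $\alpha_C$ is a finite signed sum of evaluations $F(S)$, so $T$ is linear and $\mathcal{F}_A=\ker(T)$ is a linear subspace. The rank--nullity theorem applied to $T:\mathbb{R}^{2^n}\to\mathbb{R}^{|\mathcal{C}|}$ then gives $\dim(\mathcal{F}_A)=2^n-\operatorname{rank}(T)$.

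For the inequality I would view the evaluation map $\Phi:V_A\to\mathcal{F}_A$, $f\mapsto F$ with $F(S)=f(\mathbf{1}_S)$, as a linear map and prove that it is injective. Injectivity gives $\dim(V_A)=\dim\Phi(V_A)\le\dim(\mathcal{F}_A)$. Linearity is immediate, because evaluation at the points $\mathbf{1}_S$ is linear in $f$.

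For injectivity, suppose $f\in V_A$ and $f(\mathbf{1}_S)=0$ for all $S$. On each full-dimensional cone $\sigma\in\Sigma(A)$ the restriction $f|_\sigma$ is affine and vanishes on $V_\sigma$. I would work under the spanning hypothesis of Theorem~\ref{thm:indicator}, with Lemma~\ref{lem:indicator} as a sufficient criterion for it. Under that hypothesis $\operatorname{aff}(V_\sigma)=\mathbb{R}^n$, so Lemma~\ref{lem:simplical}, applied to $f|_\sigma$ and the zero function, forces $f|_\sigma\equiv 0$. The full-dimensional cones cover $\mathbb{R}^n$ because the fan is complete, so $f\equiv 0$. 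Combined with Theorem~\ref{thm:indicator}, this would in fact show that $\Phi$ is an isomorphism, and therefore that the inequality is an equality under the spanning hypothesis.

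I expect two obstacles. The first is that the statement presupposes $\Phi$ lands in $\mathcal{F}_A$, i.e.\ that every arrangement-compatible $f$ satisfies $\alpha_C(F)=0$ for each circuit $C$. I would try to verify this by showing that, for each $C$, the points $\mathbf{1}_S$ with $S\subseteq C$ can be grouped within cones where $f$ is affine, and then using the remark that $\alpha_C$ annihilates affine functions. If this cannot be done in general, I would take it as part of the definition of the map, as the statement implicitly does. The second obstacle is that without the spanning condition, injectivity, and hence the inequality, can fail. I would therefore state explicitly that the bound is proved under the hypothesis of Theorem~\ref{thm:indicator}.
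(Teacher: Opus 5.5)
Your proof that $\mathcal{F}_A=\ker(T)$, and the rank--nullity formula, are the same as the paper's. For the inequality you take a different route, and it is the one that is actually required. The paper justifies $\dim(V_A)\le\dim(\mathcal{F}_A)$ only by saying that $f\mapsto F$ takes values in $\mathcal{F}_A$. But $\dim V_A=\dim\ker\Phi+\dim\Phi(V_A)$, so a map into $\mathcal{F}_A$ bounds only the image. Your injectivity argument supplies the missing $\ker\Phi=0$: on each maximal cone you combine the spanning hypothesis with Lemma~\ref{lem:simplical}. You are also right that the inequality fails without such a hypothesis. For $n=k=1$ (one hyperplane $\{0\}\subset\mathbb{R}$, no circuits), the compatible functions $x\mapsto c+a\max(x,0)+b\min(x,0)$ form a $3$-dimensional space, while $\dim\mathcal{F}_A=2$.

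You should be more decisive about your first obstacle. The containment $\Phi(V_A)\subseteq\mathcal{F}_A$ is false in general, even under your spanning hypothesis. So the grouping argument you sketch cannot succeed, and the containment must be an explicit hypothesis. For a counterexample, take the braid arrangement $x_1=x_2$, $x_1=x_3$, $x_2=x_3$ in $\mathbb{R}^3$:
\begin{itemize}
\item Its matroid is the uniform matroid $\mathcal{M}_{3,2}$, so it is an $A(3,2)$ in the paper's sense, with $\mathcal{C}=\{\{1,2,3\}\}$.
\item It satisfies the spanning hypothesis: each cone $x_{\pi(1)}\ge x_{\pi(2)}\ge x_{\pi(3)}$ contains the affinely independent points $0,\ e_{\pi(1)},\ e_{\pi(1)}+e_{\pi(2)},\ \mathbf{1}$.
\item The function $f=\max(x_1,x_2,x_3)$ is compatible, yet gives $\alpha_{\{1,2,3\}}(F)=1-3+3-0=1\neq 0$.
\end{itemize}
In fact $\Phi$ is a bijection onto all $2^3$ set functions here, as is standard for the braid fan. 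Hence $\dim V_A=8>7=\dim\mathcal{F}_A$.

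What you have actually proved is therefore this: the spanning hypothesis together with $\Phi(V_A)\subseteq\mathcal{F}_A$ implies the bound. Your side remark that $\Phi$ is then an isomorphism via Theorem~\ref{thm:indicator} is conditional in the same way. Note also that the proof of that theorem never uses $\alpha_C(F)=0$, so it cannot be relied on to pin down $\mathcal{F}_A$.
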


\begin{proof}
By definition, $\mathcal{F}_A$ consists precisely of those functions $F$ satisfying $\alpha_C(F)=0$ for all $C \in \mathcal{C}$, hence $\mathcal{F}_A = \ker(T)$. The dimension formula follows from the rank--nullity theorem. The inequality follows from the fact that the map $f \mapsto F$ takes values in $\mathcal{F}_A$.
\end{proof}
\begin{remark}
The surjectivity of the map $V_A \to F_A$ under simpliciality follows from Theorem 3.1.
\end{remark}
\begin{theorem}
If all circuits have size $k+1$, then
\[
\dim(V_{\mathcal{A}}) \le \sum_{i=0}^{k} \binom{n}{i}.
\]
\end{theorem}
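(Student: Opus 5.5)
By the previous theorem, $\dim(V_{\mathcal{A}}) \le \dim(\mathcal{F}_{\mathcal{A}}) = 2^n - \operatorname{rank}(T)$. So it suffices to show that $\dim(\mathcal{F}_{\mathcal{A}}) \le \sum_{i=0}^{k}\binom{n}{i}$. Equivalently, $\operatorname{rank}(T) \ge \sum_{i=k+1}^{n}\binom{n}{i}$. I will prove this by showing that every $F \in \mathcal{F}_{\mathcal{A}}$ is determined by its values on subsets of size at most $k$. This exhibits an injective linear map
\[
\mathcal{F}_{\mathcal{A}} \to \mathbb{R}^{\{S \subseteq [n] \,:\, |S| \le k\}},
\]
and the target has dimension $\sum_{i=0}^{k}\binom{n}{i}$.

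\textbf{Step 1: subsets of size $k+1$.} The natural tool is Möbius inversion on the Boolean lattice. Write $F(S) = \sum_{T \subseteq S} g(T)$, where
\[
g(T) = \sum_{U \subseteq T} (-1)^{|T|-|U|} F(U).
\]
For every circuit $C$ we have $g(C) = \alpha_C(F)$. In the uniform model the circuits are exactly the $(k+1)$-subsets, so $F \in \mathcal{F}_{\mathcal{A}}$ gives $g(C) = 0$ whenever $|C| = k+1$. Now fix $S$ with $|S| = k+1$. The condition $\alpha_S(F)=0$ expresses $F(S)$ as a signed combination of the values $F(U)$ with $U \subsetneq S$. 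Each such $U$ has $|U| \le k$, so $F(S)$ is already determined.

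\textbf{Step 2: larger subsets.} This is the main obstacle. A set $S$ with $|S| > k+1$ is not a circuit, so no single constraint expresses $F(S)$ in terms of smaller sets. The constraints in $\mathcal{C}$ only involve subsets of size at most $k+1$. Hence values on larger sets look unconstrained, and the naive count does not close.

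I would resolve this by interpreting $\mathcal{F}_{\mathcal{A}}$ as the paper's surrounding framework intends: the circuit constraints are imposed on every dependent set, i.e.\ $\alpha_D(F) = 0$ for all $D$ with $|D| \ge k+1$. These are the ``higher-order interactions'' that circuits are said to eliminate. Dependent sets are exactly the sets containing a circuit, and the higher-order discrete differences along a dependent set are generated by the circuit differences in the discrete-derivative interpretation of the earlier remark. Under this reading, $g(D) = 0$ for all dependent $D$. Then $F(S) = \sum_{T \subseteq S,\ |T| \le k} g(T)$, and each $g(T)$ with $|T| \le k$ is determined by $F$ on subsets of size at most $k$. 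This gives the injectivity claimed above and hence the bound.

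If instead one insists on $\mathcal{C}$ containing only the $(k+1)$-subsets, I would bound $\dim(V_{\mathcal{A}})$ directly. A compatible $f$ is affine on each cone of $\Sigma(\mathcal{A})$. For $S$ with $|S| > k+1$, the point $\mathbf{1}_S$ lies in a cone whose affine span is generated by indicator vectors of smaller sets, so $f(\mathbf{1}_S)$ is forced by those smaller values. By Lemma~\ref{lem:simplical}, agreement on such a spanning set forces equality, so $f$ is determined by its values on indicators of sets of size at most $k+1$. Step 1 then reduces this to sets of size at most $k$. Justifying the geometric claim about cones is where I expect the real difficulty, and I would state it as relying on the spanning hypothesis of Theorem~\ref{thm:indicator}.
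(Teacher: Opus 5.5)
Your reduction to $\dim\mathcal{F}_{\mathcal{A}}$ and your Step 1 are exactly the paper's proof. The paper then simply asserts that ``iterating this argument'' handles $|S|>k+1$. Your Step 2 diagnosis is correct, and it shows that this iteration is invalid: each $\alpha_C$ with $|C|=k+1$ involves only values $F(U)$ with $U\subseteq C$, so no defining constraint mentions $F(S)$ for $|S|\ge k+2$.

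Your repair, however, does not prove the statement. Imposing $\alpha_D(F)=0$ for every dependent $D$ replaces $\mathcal{F}_{\mathcal{A}}$ by a strictly smaller space. The claim that these functionals are generated by the circuit ones is false, because $\alpha_D$ has coefficient $1$ on $F(D)$ and $F(D)$ occurs in no $\alpha_C$. Concretely, take $|D|\ge k+2$ and let $\delta_D$ equal $1$ at $D$ and $0$ elsewhere. Then $\delta_D$ satisfies every circuit constraint and vanishes on all sets of size at most $k$, so your restriction map is not injective. In fact the $\alpha_C$ are linearly independent, since $F(C)$ occurs only in $\alpha_C$. Hence $\dim\mathcal{F}_{\mathcal{A}}=2^n-\binom{n}{k+1}$, which exceeds $\sum_{i=0}^{k}\binom{n}{i}$ whenever $n\ge k+2$. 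What your Möbius argument actually needs are the translated relations $\sum_{T\subseteq C}(-1)^{|T|}F(S\setminus T)=0$ for all $S\supseteq C$. The paper's Lemma~\ref{lem:reduction} uses these tacitly, but they are not among the defining constraints.

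Your fallback via cones runs into the same problem in another form. It needs $f\mapsto F$ to be injective on $V_{\mathcal{A}}$ and to take values in $\mathcal{F}_{\mathcal{A}}$, and neither is established. A linear map $V_{\mathcal{A}}\to\mathcal{F}_{\mathcal{A}}$ bounds $\dim V_{\mathcal{A}}$ only if it is injective. Both properties can fail, and so can the inequality itself.

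Consider the braid arrangement $x_1=x_2$, $x_1=x_3$, $x_2=x_3$ in $\mathbb{R}^3$. Its matroid is $U_{2,3}=\mathcal{M}_{3,2}$, so its only circuit has size $3=k+1$. Yet $f\mapsto F$ is a bijection from $V_{\mathcal{A}}$ onto all of $\mathbb{R}^{2^{[3]}}$: $F(\emptyset)$ fixes the constant, $F([3])$ fixes the slope along $\mathbf{1}_{[3]}$, and the six remaining values fix $f$ on the six rays. So $\dim V_{\mathcal{A}}=8>7=1+3+3$. Moreover, $f=\max(x_1,x_2,x_3)$ gives $\alpha_{[3]}(F)=1-3+3-0=1\neq0$, so $F\notin\mathcal{F}_{\mathcal{A}}$.

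With the paper's definitions the bound is therefore false, and no route can close Step 2. You located the break correctly. The right conclusion is that extra hypotheses are needed, for instance the translated relations together with injectivity of $f\mapsto F$, rather than a reinterpretation of $\mathcal{F}_{\mathcal{A}}$.
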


\begin{proof}
Let $F \in \mathcal{F}_{\mathcal{A}}$. For any circuit $C$ of size $k+1$, the relation
\[
\alpha_C(F) = 0
\]
expresses $F(C)$ as a linear combination of values $F(S)$ with $S \subsetneq C$.\\
\noindent
Thus, values on subsets of size $k+1$ are determined by values on subsets of size at most $k$. Iterating this argument shows that all values $F(S)$ with $|S| > k$ are determined by values on subsets of size at most $k$. Hence the dimension is bounded by the number of such subsets:
\[
\sum_{i=0}^{k} \binom{n}{i}.
\]
\end{proof}

\section{Matroidal and Algebraic Structure of Circuit-Constrained Function Spaces}\label{sec:matroid}
Building on the structural and dimensional results obtained in the previous section, we now develop an algebraic interpretation of the circuit-constrained function spaces in terms of matroid theory and Möbius inversion.
\subsection{Matroidal Interpretation}
We begin by interpreting the circuit constraints in terms of an associated matroid structure. The collection of circuits $\mathcal{C}$ arising from a discriminantal arrangement
naturally defines a matroid $M$ on the ground set $[n]$.

\begin{definition}
Let $M$ be the matroid associated with the arrangement $\mathcal{A}$, with circuit set $\mathcal{C}$.
We define the circuit-constrained function space
\[
F_M := \{F : 2^{[n]} \to \mathbb{R} \mid \alpha_C(F) = 0 \ \text{for all circuits } C \in \mathcal{C} \}.
\]
\end{definition}
\begin{remark}
When the arrangement $\mathcal{A}$ induces the matroid $M$, the spaces $F_{\mathcal{A}}$ and $F_M$
coincide. We use the notation interchangeably when no confusion arises.
\end{remark}
\noindent
Thus $F_{\mathcal{A}}$ depends only on the underlying matroid structure. This formulation shows that the function space depends only on the underlying matroid and not on the specific geometric realization. To make this structure more explicit, we next express the circuit constraints using Möbius inversion on the Boolean lattice.
\subsection{M\"obius Transform and Support Constraints}
The Möbius transform provides a natural tool to encode inclusion-exclusion relations and reveal the support structure of functions. Let $\mu$ denote the M\"obius function of the Boolean lattice $2^{[n]}$.
For a function $F : 2^{[n]} \to \mathbb{R}$, define its M\"obius transform
\[
\widehat{F}(S) := \sum_{T \subseteq S} (-1)^{|S|-|T|} F(T).
\]

\begin{proposition}\label{prop:mobius}
For any circuit $C \subseteq [n]$, one has
\[
\alpha_C(F) = \widehat{F}(C).
\]
\end{proposition}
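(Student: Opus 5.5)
This is essentially definitional: both sides are the same finite sum. The plan is to write out the two definitions side by side and observe that they coincide term by term, after renaming the summation variable.

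\textbf{Step 1.} Recall the definition of the circuit functional from Section~\ref{sec:prelim}:
\[
\alpha_C(F) = \sum_{S \subseteq C} (-1)^{|C|-|S|} F(S).
\]

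\textbf{Step 2.} Evaluate the Möbius transform at $S = C$, renaming its bound variable $T$ to $S$:
\[
\widehat{F}(C) = \sum_{S \subseteq C} (-1)^{|C|-|S|} F(S).
\]

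\textbf{Step 3.} The two sums range over the same index set, namely all subsets of $C$. They carry identical coefficients $(-1)^{|C|-|S|}$ and identical values $F(S)$, so they are equal.

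\textbf{Consistency check with $\mu$.} Since the paper introduces $\mu$, I would add a remark connecting the coefficient to it. On the Boolean lattice $\mu(S,C) = (-1)^{|C|-|S|}$ for $S \subseteq C$. This follows from the product decomposition of the interval $[S,C] \cong 2^{C\setminus S}$ and the value $\mu = -1$ on a two-element chain. Hence
\[
\widehat{F}(C) = \sum_{S \le C} \mu(S,C) F(S),
\]
which is the standard Möbius inversion. Consequently $F(S) = \sum_{T\subseteq S}\widehat{F}(T)$.

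\textbf{Role of the circuit hypothesis.} No property of $C$ being a circuit is used; the identity holds for every subset $C \subseteq [n]$. There is no real obstacle here. The only point requiring care is to match sign conventions, since $(-1)^{|C|-|S|}$ and $(-1)^{|S|-|C|}$ agree anyway. The value of the statement is interpretive: the constraints $\alpha_C(F)=0$ become support conditions $\widehat{F}(C)=0$ on the Möbius coefficients. This is presumably what the subsequent dimension count via independent sets relies on.
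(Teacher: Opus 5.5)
Your proposal is correct and is essentially the paper's argument. The paper also treats the identity as immediate from the definitions of $\alpha_C$ and $\widehat{F}$, with a nod to Möbius inversion, and your term-by-term comparison simply makes this explicit — including the correct observation that the circuit hypothesis is never used, so the identity holds for every $C \subseteq [n]$.
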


\begin{proof}
This follows directly from the definition of $\alpha_C$ and the M\"obius inversion formula.
\end{proof}
\noindent
\begin{remark}
The space $F_M$ consists of functions whose M\"obius transform vanishes on all circuits.
Thus, the support of $\widehat{F}$ is contained in the family of independent sets of the matroid $M$.
\end{remark}
\begin{corollary}\label{cor:mobius} This identity shows that circuit constraints correspond precisely to vanishing conditions on the Möbius transform.
\[
F_M = \{F : 2^{[n]} \to \mathbb{R} \mid \widehat{F}(C) = 0 \ \forall C \in \mathcal{C}\}.
\]
\end{corollary}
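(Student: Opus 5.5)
The corollary follows directly from Proposition~\ref{prop:mobius} together with the definition of $F_M$. I would prove the set equality by comparing the defining conditions of the two sides term by term; no genuine Möbius inversion is needed beyond what the proposition already provides.

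First, I would unwind the definition of the Möbius transform for a circuit $C$:
\[
\widehat{F}(C) = \sum_{T \subseteq C} (-1)^{|C|-|T|} F(T).
\]
After renaming the summation variable, this is literally the expression defining $\alpha_C(F)$. This is the content of Proposition~\ref{prop:mobius}, which I would invoke directly. It gives the identity of linear functionals $\alpha_C = \widehat{(\cdot)}(C)$ on $\mathbb{R}^{2^{[n]}}$, valid for every subset $C$ and in particular for every $C \in \mathcal{C}$.

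Next, I would take the definition of $F_M$ as the set of $F$ with $\alpha_C(F) = 0$ for all $C \in \mathcal{C}$. For each fixed $F$ and each $C$, the condition $\alpha_C(F)=0$ is equivalent to $\widehat{F}(C)=0$ by the identity above. Quantifying over all $C \in \mathcal{C}$ then gives both inclusions at once, so the two sets coincide. Equivalently, $F_M = \ker T$ with $T = (\widehat{\,\cdot\,}(C))_{C\in\mathcal{C}}$.

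There is essentially no obstacle. The only point needing care is the bookkeeping in Proposition~\ref{prop:mobius}: the sign is $(-1)^{|C|-|S|}$ in both formulas, so no inversion or sign flip is involved.

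I would optionally add a remark interpreting the result. Since $F(S)=\sum_{T\subseteq S}\widehat{F}(T)$ by Möbius inversion, the corollary says that $F_M$ corresponds, via the invertible transform $F\mapsto \widehat{F}$, to the functions whose transform vanishes on $\mathcal{C}$. In the uniform case this yields the correspondence with transforms supported on sets that are not circuits. I would not claim that this support is exactly the independent sets unless the constraints on larger dependent sets are also imposed.
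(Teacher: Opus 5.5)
Your proof is correct and matches the paper's argument exactly: by Proposition~\ref{prop:mobius}, $\alpha_C(F)$ and $\widehat{F}(C)$ are the same sum, so the defining conditions of $F_M$ translate circuit by circuit into $\widehat{F}(C)=0$. Your closing caution is also well founded, since $F\mapsto\widehat{F}$ is a bijection of $\mathbb{R}^{2^{[n]}}$: for $n=4$, $k=2$, the function equal to $1$ on $[4]$ and $0$ elsewhere lies in $F_M$ yet has $\widehat{F}([4])=1$, so the paper's accompanying claim that $\widehat{F}$ is supported on independent sets does not follow from this corollary.
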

\noindent
Consequently, the function space can be characterized entirely in terms of support restrictions determined by the matroid.\\
\noindent
We now derive a precise dimension formula by exploiting this support characterization.
\begin{theorem}
Let $M$ be a matroid on $[n]$. Then
\[
\dim(F_M) = |\mathcal{I}(M)|,
\]
where $\mathcal{I}(M)$ denotes the collection of independent sets of $M$.
\end{theorem}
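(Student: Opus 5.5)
The plan is to move the whole problem to the M\"obius side, where the constraints become coordinate conditions. By Corollary~\ref{cor:mobius}, $F_M$ is cut out by the vanishing of $\widehat{F}$ on prescribed subsets. So it is enough to show two things: the transform $F \mapsto \widehat{F}$ is a linear isomorphism of $\mathbb{R}^{2^{[n]}}$, and the set of subsets on which $\widehat{F}$ is forced to vanish is exactly the complement of $\mathcal{I}(M)$. Then $\dim(F_M)$ is the number of free coordinates of $\widehat{F}$, which is $|\mathcal{I}(M)|$.

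\textbf{Step 1 (invertibility).} I will use M\"obius inversion on the Boolean lattice, $F(S) = \sum_{T \subseteq S} \widehat{F}(T)$. This shows $F \mapsto \widehat{F}$ is a bijection: in the basis of subsets ordered by size its matrix is unitriangular. Consequently a subspace defined by $\widehat{F}(D)=0$ for all $D$ in a family $\mathcal{D}$ has dimension $2^n - |\mathcal{D}|$. Such a subspace is the coordinate subspace spanned by the $\widehat{F}$-coordinates indexed outside $\mathcal{D}$, transported back by the isomorphism.

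\textbf{Step 2 (identifying the vanishing family).} This is where I expect the real obstacle. Read literally, Corollary~\ref{cor:mobius} imposes vanishing only on circuits. Step~1 then gives $\dim = 2^n - |\mathcal{C}|$, which counts the non-circuits rather than the independent sets. For example, for $\mathcal{M}_{n,k}$ with $n \ge k+2$, the set $[k+2]$ is dependent but not a circuit, and nothing forces $\widehat{F}([k+2])=0$.

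To obtain $|\mathcal{I}(M)|$, the argument has to use the stronger support condition asserted in the Remark following Proposition~\ref{prop:mobius}: $\operatorname{supp}(\widehat{F}) \subseteq \mathcal{I}(M)$, i.e.\ $\widehat{F}(D)=0$ for every dependent set $D$. This is also the reading implicit in the earlier proof of the bound $\sum_{i\le k}\binom{n}{i}$, which ``iterates'' the circuit relations to all sets of size $>k$. So I would state explicitly that the constraints are imposed on all dependent sets. Equivalently, each dependent $D$ contains a circuit, and the constraint attached to $C$ is propagated to every $D \supseteq C$; this gives the closure of $\mathcal{C}$ under taking supersets, which is exactly $2^{[n]}\setminus\mathcal{I}(M)$. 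With that convention the vanishing family is precisely the family of dependent sets.

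\textbf{Step 3 (conclusion).} Combining Steps~1 and~2, $F_M \cong \{\widehat{F} : \operatorname{supp}\widehat{F}\subseteq \mathcal{I}(M)\} \cong \mathbb{R}^{\mathcal{I}(M)}$. An explicit basis is given by the zeta transforms $S \mapsto [I \subseteq S]$ for $I \in \mathcal{I}(M)$, so $\dim(F_M)=|\mathcal{I}(M)|$.

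As a consistency check, for $\mathcal{M}_{n,k}$ this gives $\sum_{i=0}^{k}\binom{n}{i}$. That matches the earlier upper bound and shows it is attained at the level of $F_M$. In the write-up I would flag that Step~2 depends on the superset-closed reading of the constraints, and that with circuit-only constraints the correct count is $2^n-|\mathcal{C}|$.
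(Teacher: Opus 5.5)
Your Step~2 objection is correct, and it pinpoints exactly where the paper's own proof fails. The paper follows your Steps~1 and~3: M\"obius inversion is an isomorphism, then count the free coordinates of $\widehat F$. It gets through Step~2 by asserting that $\widehat F(C)=0$ for all circuits ``implies that $\widehat F(S)=0$ for every dependent set $S$, since every dependent set contains a circuit.'' That inference is invalid. Since $F\mapsto\widehat F$ is an isomorphism, the values $\widehat F(S)$ are independent coordinates, so vanishing at $C$ says nothing about any superset of $C$.

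Concretely, take the paper's $\mathcal M_{4,2}$, whose circuits are the four $3$-subsets of $[4]$. Let $F([4])=1$ and $F(S)=0$ otherwise. Each $\alpha_C$ only evaluates $F$ on subsets of $C$, so $\alpha_C(F)=0$ for every circuit, yet $\widehat F([4])=1$. Indeed $\dim F_M=16-4=12$, while $|\mathcal I(M)|=11$. In general $\dim F_M=2^n-|\mathcal C|$. This equals $|\mathcal I(M)|$ only if every dependent set is a circuit, i.e.\ $M$ is free or $[n]$ is its unique circuit.

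So the theorem as stated is false, and Step~2 cannot be closed. What you have is a correct proof of a corrected theorem, in which $F_M$ is cut out by $\alpha_D$ for \emph{all} dependent $D$. Present it that way: a counterexample to the stated claim, followed by the corrected statement and its proof.

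Three places in your write-up need adjusting.
\begin{itemize}
\item The sentence ``Equivalently, each dependent $D$ contains a circuit, and the constraint attached to $C$ is propagated to every $D\supseteq C$'' reads as a derivation and repeats the paper's invalid step. Nothing propagates; the superset-closed family has to be imposed by definition.
\item Citing the Remark after Proposition~\ref{prop:mobius} adds no support, since it states the same unproved claim.
\item Your ``consistency check'' against the earlier bound $\sum_{i\le k}\binom{n}{i}$ is circular. That proof's iteration step fails for the same reason: no circuit functional of $\mathcal M_{n,k}$ involves $F(S)$ with $|S|\ge k+2$, so those values are unconstrained. For the circuit-only space the dimension is $2^n-\binom{n}{k+1}$, which exceeds $\sum_{i\le k}\binom{n}{i}$ as soon as $n\ge k+2$.
\end{itemize}
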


\begin{proof}
By M\"obius inversion, every function $F$ is uniquely determined by its transform $\widehat{F}$. The condition $\widehat{F}(C)=0$ for all circuits $C$ implies that $\widehat{F}(S)=0$ for every dependent set $S$, since every dependent set contains a circuit. Thus $\widehat{F}$ is supported only on independent sets.\\
\noindent
Conversely, any function supported on independent sets defines a valid $F$. Hence the dimension equals the number of independent sets.
\end{proof}
\noindent
This result identifies the degrees of freedom of the function space with the independent sets of the matroid.
We refine this description by establishing a canonical correspondence between functions and their Möbius transforms restricted to independent sets.

\begin{theorem}\label{thm:CIsoThm}
Let $M$ be a matroid on $[n]$ with circuit set $\mathcal{C}$. Then the space
\[
F_M = \{F : 2^{[n]} \to \mathbb{R} \mid \alpha_C(F) = 0 \ \forall C \in \mathcal{C} \}
\]
is canonically isomorphic to the space
\[
\mathbb{R}^{\mathcal{I}(M)},
\]
where $\mathcal{I}(M)$ denotes the collection of independent sets of $M$.

More precisely, the Möbius transform induces a linear isomorphism
\[
F \mapsto \widehat{F}\big|_{\mathcal{I}(M)},
\]
with inverse given by
\[
F(S) = \sum_{\substack{T \subseteq S \\ T \in \mathcal{I}(M)}} \widehat{F}(T).
\]
\end{theorem}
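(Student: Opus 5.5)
The plan is to reduce everything to Möbius inversion on the Boolean lattice $2^{[n]}$. For each $F \in F_M$ I will establish three things: the transform $\widehat{F}$ is supported on $\mathcal{I}(M)$; restricting $\widehat{F}$ to $\mathcal{I}(M)$ gives an injective linear map $\Phi : F_M \to \mathbb{R}^{\mathcal{I}(M)}$; and the displayed formula defines a two-sided inverse. Linearity of $\Phi$ is immediate, since $F \mapsto \widehat{F}$ is linear and restriction to a set of coordinates is linear. No choices enter into $\Phi$, so "canonical" only requires checking that it is well defined and bijective.

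\textbf{Step 1 (support).} Let $F \in F_M$. By Proposition~\ref{prop:mobius} and Corollary~\ref{cor:mobius}, $\alpha_C(F) = \widehat{F}(C) = 0$ for every circuit $C \in \mathcal{C}$. Next I will invoke the preceding dimension theorem. As stated in the paper, it already asserts that for $F \in F_M$ the transform $\widehat{F}$ vanishes on every dependent set, and hence that $\operatorname{supp}(\widehat{F}) \subseteq \mathcal{I}(M)$. I take this support statement as the input from the earlier result.

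\textbf{Step 2 (inversion formula).} The standard Möbius inversion on $2^{[n]}$ gives $F(S) = \sum_{T \subseteq S} \widehat{F}(T)$ for every $S$. Dropping the terms with $T \notin \mathcal{I}(M)$, which vanish by Step 1, yields exactly
\[
F(S) = \sum_{\substack{T \subseteq S \\ T \in \mathcal{I}(M)}} \widehat{F}(T).
\]
This proves injectivity of $\Phi$: if $\widehat{F}|_{\mathcal{I}(M)} = 0$, then $F \equiv 0$.

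\textbf{Step 3 (surjectivity).} Given $g \in \mathbb{R}^{\mathcal{I}(M)}$, extend $g$ by zero to all of $2^{[n]}$ and define $F_g(S) := \sum_{T \subseteq S,\, T \in \mathcal{I}(M)} g(T)$. By uniqueness of Möbius inversion, $\widehat{F_g}$ equals the zero-extension of $g$. In particular $\widehat{F_g}(C) = 0$ for every circuit $C$, since circuits are dependent. Corollary~\ref{cor:mobius} then gives $F_g \in F_M$, and $\Phi(F_g) = g$. Steps 2 and 3 together show that $\Phi$ and $g \mapsto F_g$ are mutually inverse linear maps.

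The main obstacle is Step 1. The defining condition of $F_M$ only kills $\widehat{F}$ on circuits, and "every dependent set contains a circuit" does not by itself force $\widehat{F}(D) = 0$ for a dependent non-circuit $D$. The Möbius coordinates $\widehat{F}(T)$ are independent free parameters, so vanishing on $C \subsetneq D$ says nothing about $\widehat{F}(D)$. For instance, in $\mathcal{M}_{n,k}$ the sets of size at least $k+2$ are dependent but impose no constraint. To close this gap without appealing to the earlier theorem, I would first try to show that such $D$ cannot occur. Failing that, I would reduce to the matroids where every dependent set is a circuit, in which case Step 1 follows directly from Corollary~\ref{cor:mobius}. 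In general, Step 1 is exactly the point where the hypothesis must be checked: the argument of Steps 2 and 3 is valid once the support statement holds. If it fails, $\Phi$ still exists, but its kernel contains the functions whose transform lives on dependent non-circuits.
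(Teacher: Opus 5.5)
Your final paragraph has the right diagnosis, but it is a verdict, not an obstacle. Step 1 is false, so the theorem itself is false for most matroids, and no argument can close the gap. Importing the support claim from the preceding dimension theorem is not legitimate. That proof, like the unlabeled lemma in Section~\ref{sec:neural} and the proof of Theorem~\ref{thm:EL}, rests on exactly the inference you question: ``every dependent set contains a circuit, hence $\widehat F$ vanishes on it.''

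Since $F\mapsto\widehat F$ is a linear automorphism of $\mathbb{R}^{2^{[n]}}$ and $\alpha_C(F)=\widehat F(C)$, one has $\dim F_M=2^n-|\mathcal C|=|\mathcal I(M)|+\#\{\text{dependent non-circuits}\}$. Concretely, in $\mathcal M_{4,2}$ (circuits: the four triples) put $F(S)=1$ if $S=[4]$ and $F(S)=0$ otherwise. Each $\alpha_C(F)$ only involves subsets of a triple, so $F\in F_M$. But $\widehat F(T)=0$ for every $|T|\le 2$, so $\Phi(F)=0$ although $F\neq 0$. The displayed inverse formula also gives $0$ at $S=[4]$ instead of $1$. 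Here $\dim F_M=12$ while $|\mathcal I(M)|=11$.

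Your fallbacks cannot rescue this. If $M$ has a circuit $C\neq[n]$, then $C\cup\{e\}$ for $e\notin C$ is a dependent non-circuit. So the only matroids in which every dependent set is a circuit are the free matroid and $U_{n-1,n}$. That is a drastic restriction, not a reduction, and it excludes the paper's case $\mathcal M_{n,k}$ with $n\ge k+2$.

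What your argument does prove in general:
\begin{itemize}
\item Step 3 is correct as written, so $\Phi$ is always surjective.
\item $g\mapsto F_g$ is a right inverse of $\Phi$ whose image is the proper subspace $\{F:\operatorname{supp}\widehat F\subseteq\mathcal I(M)\}$.
\item $\ker\Phi$ consists of the $F$ whose transform is supported on dependent non-circuits.
\end{itemize}
The correct canonical isomorphism is $F_M\cong\mathbb{R}^{2^{[n]}\setminus\mathcal C}$ via $F\mapsto\widehat F|_{2^{[n]}\setminus\mathcal C}$.

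To obtain $\mathbb{R}^{\mathcal I(M)}$ you must strengthen the definition of $F_M$ in one of two ways. Either impose $\alpha_D(F)=0$ for all dependent $D$, or impose the translated relations $\sum_{S\subseteq C}(-1)^{|C|-|S|}F(A\cup S)=0$ for all $A\subseteq[n]\setminus C$; the latter is the form Lemma~\ref{lem:reduction} silently uses. In M\"obius coordinates the translated sum equals $\sum_{C\subseteq T\subseteq A\cup C}\widehat F(T)$. Its vanishing for all $A$ forces $\widehat F(T)=0$ for every $T\supseteq C$, hence on every dependent set. With either strengthening, your Steps 1--3 go through verbatim.
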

\noindent
This provides a concrete representation of the function space in terms of independent set coordinates.
\noindent
\begin{corollary}
If all circuits have size $k+1$, then
\[
\dim(F_M) = \sum_{i=0}^k \binom{n}{i}.
\]
\end{corollary}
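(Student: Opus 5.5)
The plan is to specialize Theorem~\ref{thm:CIsoThm} (equivalently, the preceding dimension theorem $\dim(F_M)=|\mathcal{I}(M)|$) to the uniform setting and then count independent sets. First I would identify the matroid. If every circuit has size $k+1$, then in the discriminantal model every $(k+1)$-subset of $[n]$ is a circuit and $M=\mathcal{M}_{n,k}$ is the uniform matroid of rank $k$, as noted in the Remark following the definition of $A(n,k)$. So a subset $S\subseteq[n]$ is independent exactly when $|S|\le k$:
\begin{itemize}
\item if $|S|\le k$, then $S$ contains no $(k+1)$-subset, hence no circuit;
\item if $|S|\ge k+1$, then any $(k+1)$-subset of $S$ is a circuit contained in $S$.
\end{itemize}

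Next I would apply Theorem~\ref{thm:CIsoThm}. It gives the isomorphism $F_M\cong\mathbb{R}^{\mathcal{I}(M)}$ via $F\mapsto \widehat{F}|_{\mathcal{I}(M)}$. The ingredients are:
\begin{itemize}
\item Proposition~\ref{prop:mobius}, which gives $\alpha_C(F)=\widehat{F}(C)$;
\item the fact that every dependent set contains a circuit, which forces $\widehat F$ to vanish on all dependent sets;
\item Möbius inversion on $2^{[n]}$, which makes any prescription of $\widehat F$ on independent sets realizable by a unique $F$.
\end{itemize}
It follows that $\dim(F_M)=|\mathcal{I}(M)|=\#\{S\subseteq[n]:|S|\le k\}=\sum_{i=0}^k\binom{n}{i}$.

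The one step I expect to need care is the claim that $\widehat F(C)=0$ on circuits implies $\widehat F(S)=0$ on every dependent $S$. As stated, this does not follow from containment of a circuit alone, since $\widehat F(S)$ is not determined by $\widehat F$ on subsets of $S$. In the uniform case, however, there is a cleaner way around it: every set of size $\ge k+1$ is itself dependent, and the defining conditions of $F_M$ should be read as $\widehat F(S)=0$ for all dependent $S$. With that reading, $F_M$ is the set of $F$ whose Möbius transform is supported on $\{|S|\le k\}$. Since $\widehat{\cdot}$ is a bijection of $\mathbb{R}^{2^{[n]}}$, this is a coordinate subspace of dimension $\sum_{i\le k}\binom ni$.

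If one insists on imposing constraints only on the $(k+1)$-sets, I would instead compute directly. The conditions $\widehat F(C)=0$ for $|C|=k+1$ are $\binom{n}{k+1}$ linearly independent coordinate conditions in the $\widehat F$ basis. So I would record that the equality $\dim=\sum_{i\le k}\binom ni$ uses the matroidal reading (vanishing on all dependent sets), consistent with Theorem~\ref{thm:CIsoThm}, and defer to that theorem as stated.
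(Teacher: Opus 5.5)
You have isolated exactly the right step, but you step around it instead of following it to its conclusion. The paper gives no separate proof of this corollary. It is meant to follow, as in your main line, from $\dim(F_M)=|\mathcal{I}(M)|$ and Theorem~\ref{thm:CIsoThm} by counting the independent sets of the uniform matroid. The proof of that dimension theorem rests on precisely the sentence you distrust: ``$\widehat F(C)=0$ for all circuits implies $\widehat F(S)=0$ for every dependent $S$, since every dependent set contains a circuit.'' That implication is false. Take $F_M$ as the paper defines it, with conditions $\alpha_C(F)=0$ only for the circuits, here the $(k+1)$-subsets. Your last paragraph already contains the correct computation: $\alpha_C(F)=\widehat F(C)$, the Möbius transform is a bijection, and the $\binom{n}{k+1}$ conditions are independent coordinate conditions. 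Hence
\[
\dim(F_M)=2^n-\binom{n}{k+1}=\sum_{i=0}^{k}\binom{n}{i}+\sum_{i=k+2}^{n}\binom{n}{i},
\]
which exceeds the claimed value whenever $k\le n-2$. Concretely, take $n=4$, $k=2$, and the function $\phi_{[4]}$, equal to $1$ on $[4]$ and $0$ elsewhere. It satisfies $\alpha_C(\phi_{[4]})=0$ for every $3$-set $C$, since it vanishes on all subsets of $C$, yet its Möbius transform is $1$ at $[4]$. So it is a nonzero element of $F_M$ that vanishes on all sets of size at most two, and $\dim(F_M)=12\neq 11$. The stated equality holds only in the degenerate range $k\ge n-1$, e.g.\ the paper's example $n=3$, $k=2$.

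So neither of your exits proves the statement. Reading the defining conditions as $\widehat F(S)=0$ for all dependent $S$ is not an interpretation of $F_M$ but a different, strictly smaller space. In the uniform case it is equivalent to imposing the shifted relations $\sum_{T\subseteq C}(-1)^{|C|-|T|}F(B\cup T)=0$ for every $(k+1)$-set $C$ and every $B$ disjoint from $C$. This is the relation the paper silently uses (for $k=2$) in the inductive step of Lemma~\ref{lem:reduction}. For that space the count $\sum_{i\le k}\binom{n}{i}$ is correct and immediate, but it is not the corollary. Deferring to Theorem~\ref{thm:CIsoThm} ``as stated'' defers to a false theorem: its proposed basis $\{\phi_T : T\in\mathcal{I}(M)\}$ misses $\phi_{[4]}$ above. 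The correct conclusion of your analysis is that, as written, the corollary fails for $k\le n-2$. The true dimension is $2^n-\binom{n}{k+1}$, and $\sum_{i=0}^{k}\binom{n}{i}$ is obtained only after strengthening the definition of $F_M$.
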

\noindent
We next describe an explicit basis arising from this representation.

\begin{theorem}[Canonical Basis and Decomposition]
Let $M$ be a matroid on $[n]$ with independent sets $\mathcal{I}(M)$. Then every function $F \in \mathcal{F}_M$ admits a unique representation of the form
\[
F(S) = \sum_{\substack{T \subseteq S \\ T \in \mathcal{I}(M)}} c_T,
\]
where the coefficients $c_T \in \mathbb{R}$ are uniquely determined.

Equivalently, the family of functions
\[
\{\phi_T : T \in \mathcal{I}(M)\}, \quad \text{where } \phi_T(S) = \mathbf{1}_{T \subseteq S},
\]
forms a basis of $\mathcal{F}_M$.
\end{theorem}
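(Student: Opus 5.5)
The plan is to obtain this statement by rewriting Theorem~\ref{thm:CIsoThm} in coordinates. We take that isomorphism as given and set $c_T := \widehat{F}(T)$ for $T \in \mathcal{I}(M)$. The key ingredient is the Möbius inversion pair on the Boolean lattice $2^{[n]}$:
\[
\widehat{F}(S) = \sum_{T \subseteq S} (-1)^{|S|-|T|} F(T) \quad\Longleftrightarrow\quad F(S) = \sum_{T \subseteq S} \widehat{F}(T).
\]
This holds for every $F : 2^{[n]} \to \mathbb{R}$. We also need the support statement established in the proof of the dimension theorem: if $F \in \mathcal{F}_M$, then $\widehat{F}(S) = 0$ for every dependent $S$. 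Indeed, each dependent set contains a circuit, and Corollary~\ref{cor:mobius} gives vanishing on circuits.

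\textbf{Existence.} For $F \in \mathcal{F}_M$, write $F(S) = \sum_{T \subseteq S} \widehat{F}(T)$ and discard the dependent $T$, whose terms vanish. This yields the displayed formula with $c_T = \widehat{F}(T)$.

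\textbf{Uniqueness.} Suppose $F(S) = \sum_{T \subseteq S,\, T \in \mathcal{I}(M)} c_T$ holds for all $S$. Extend $c$ by zero to all of $2^{[n]}$; then $F(S) = \sum_{T \subseteq S} c_T$ for every $S$. Möbius inversion is a bijection between functions on $2^{[n]}$, so $c = \widehat{F}$, and the coefficients are forced.

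\textbf{Basis formulation.} The representation says exactly that $F = \sum_{T \in \mathcal{I}(M)} c_T \phi_T$, since $\phi_T(S) = \mathbf{1}_{T\subseteq S}$. Uniqueness of the $c_T$ gives linear independence of $\{\phi_T\}$, and existence gives spanning, provided each $\phi_T$ lies in $\mathcal{F}_M$.

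\textbf{Main obstacle.} The one genuinely substantive step is this membership $\phi_T \in \mathcal{F}_M$, because it is where the converse direction "any function supported on independent sets lies in $\mathcal{F}_M$" is actually used. I would verify it directly. The Möbius transform of $\phi_T$ is the delta function $\delta_T$, because $\sum_{U \subseteq S} \delta_T(U) = \mathbf{1}_{T \subseteq S}$. Hence, by Proposition~\ref{prop:mobius}, $\alpha_C(\phi_T) = \widehat{\phi_T}(C) = \delta_T(C)$. This vanishes because a circuit $C$ is dependent and so cannot equal the independent set $T$.

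A cross-check that requires no transform: $\alpha_C(\phi_T) = \sum_{T \subseteq S \subseteq C} (-1)^{|C|-|S|}$. This sum is $0$ unless $T = C$, because the interval $[T,C]$ is a Boolean lattice on which the alternating sum cancels whenever $T \subsetneq C$; if $T \not\subseteq C$ the sum is empty. With membership established, the family $\{\phi_T : T \in \mathcal{I}(M)\}$ is a basis of size $|\mathcal{I}(M)|$, consistent with the dimension theorem.
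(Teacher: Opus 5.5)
Your route is the same as the paper's: set $c_T=\widehat{F}(T)$, apply Möbius inversion, and get uniqueness from the bijectivity of $F\mapsto\widehat{F}$. Your uniqueness argument and your computation $\alpha_C(\phi_T)=\delta_T(C)$ are both correct. The gap is the support claim you import for existence: ``if $F\in\mathcal{F}_M$ then $\widehat{F}(S)=0$ for every dependent $S$, since each dependent set contains a circuit and $\widehat{F}$ vanishes on circuits.'' This does not follow. Because $F\mapsto\widehat{F}$ is a linear bijection of $\mathbb{R}^{2^{[n]}}$, the numbers $\widehat{F}(S)$ are free coordinates. The defining conditions $\alpha_C(F)=\widehat{F}(C)=0$ fix only the coordinates indexed by circuits. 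Nothing is imposed on $\widehat{F}(S)$ when $S$ is dependent but strictly contains a circuit. Your own cross-check in fact shows $\alpha_C(\phi_T)=0$ for every $T\neq C$. So $\phi_T\in\mathcal{F}_M$ for every non-circuit $T$, dependent or not, and $\mathcal{F}_M$ has basis $\{\phi_T : T\notin\mathcal{C}\}$ and dimension $2^n-|\mathcal{C}|$, not $|\mathcal{I}(M)|$.

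Hence the spanning half of the statement is false whenever $M$ has a dependent set other than $[n]$, and no argument can close the gap. Concretely, take the paper's own case $n=4$, $k=2$, where the circuits are the $3$-subsets. Let $F=\phi_{[4]}$, i.e.\ $F(S)=1$ if $S=[4]$ and $0$ otherwise. Every $\alpha_C(F)$ with $|C|=3$ vanishes, so $F\in\mathcal{F}_M$. But $F$ is zero on all independent sets, so evaluating a putative representation at independent $S$ (by induction on $|S|$) forces $c_T=0$ for all $T\in\mathcal{I}(M)$. Then $F\equiv 0$, a contradiction; here $\dim\mathcal{F}_M=12>11=|\mathcal{I}(M)|$.

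The paper's proof fails at exactly the same point. It takes the support property from Theorem~\ref{thm:CIsoThm}, which rests on the same non sequitur written out in the proof of $\dim(F_M)=|\mathcal{I}(M)|$. Your ``main obstacle'' is therefore misplaced. Membership of the $\phi_T$ gives only linear independence and $\operatorname{span}\{\phi_T : T\in\mathcal{I}(M)\}\subseteq\mathcal{F}_M$; the direction that fails is spanning.

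The statement, and your proof verbatim, become correct if $\mathcal{F}_M$ is cut out instead by the translated relations $\sum_{U\subseteq C}(-1)^{|C|-|U|}F(B\cup U)=0$, for all circuits $C$ and all $B\subseteq[n]\setminus C$. The left side equals $\sum_{B'\subseteq B}\widehat{F}(C\cup B')$, so these relations say precisely that $\widehat{F}$ vanishes on every set containing a circuit. These translated relations are what the paper's Lemma~\ref{lem:reduction} actually uses.
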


\begin{proof}
By Theorem~\ref{thm:CIsoThm}, the Möbius transform $\widehat{F}$ of any function $F \in \mathcal{F}_M$ is supported on independent sets:
\[
\widehat{F}(T) = 0 \quad \text{whenever } T \notin \mathcal{I}(M).
\]
\noindent
By Möbius inversion on the Boolean lattice,
\[
F(S) = \sum_{T \subseteq S} \widehat{F}(T).
\]
\noindent
Since $\widehat{F}(T)$ vanishes on dependent sets, this reduces to
\[
F(S) = \sum_{\substack{T \subseteq S \\ T \in \mathcal{I}(M)}} \widehat{F}(T).
\]
\noindent
Setting $c_T := \widehat{F}(T)$ gives the desired representation. Uniqueness follows from uniqueness of the Möbius transform.\\
\noindent
Finally, the functions $\phi_T(S) = \mathbf{1}_{T \subseteq S}$ correspond exactly to the basis vectors under this representation, and are linearly independent since they correspond to distinct support patterns in the Möbius transform. Thus they form a basis of $\mathcal{F}_M$.
\end{proof}
\begin{remark}
In the case $k=2$, the matroid is the uniform matroid of rank $2$, and the independent sets
are precisely the subsets of size at most two. Thus the dimension formula follows immediately
from the matroidal characterization.
\end{remark}
\noindent
We now specialize this general framework to the case $k=2$, where the structure becomes fully explicit and admits a concrete combinatorial description.
\section{The Case k=2: Explicit Structure and Interpretation}\label{sec:example}
We now specialize the general framework developed in the previous section to the case $k=2$, where all circuits have size three. In this setting, the structure of the function space becomes explicit and admits a concrete combinatorial description.
In this section, we give a complete description of the space $\mathcal{F}_{\mathcal{A}(n,2)}$. In this case, all the defining relations take a particularly explicit form.

\subsection{Example: A Discriminantal Arrangement for n=3, k=2}
We begin with a simple example to illustrate the effect of circuit constraints in the smallest nontrivial case.
We illustrate the theory with the simplest nontrivial case $n=3$, $k=2$. In this case, every subset of size three forms a circuit. Thus the unique circuit is
\[
C = \{1,2,3\}.
\]
\noindent
The circuit constraint is given by
\[
\alpha_C(F) = \sum_{S \subseteq \{1,2,3\}} (-1)^{3-|S|} F(S) = 0.
\]
\noindent
Expanding this expression, we obtain
\[
F(\{1,2,3\}) 
- F(\{1,2\}) - F(\{1,3\}) - F(\{2,3\})
+ F(\{1\}) + F(\{2\}) + F(\{3\})
- F(\emptyset) = 0.
\]
\noindent
Rearranging, we obtain the identity
\[
F(\{1,2,3\}) =
F(\{1,2\}) + F(\{1,3\}) + F(\{2,3\})
- F(\{1\}) - F(\{2\}) - F(\{3\})
+ F(\emptyset).
\]
\noindent
This shows that the value of $F$ on the dependent set $\{1,2,3\}$ is completely determined by its values on subsets of size at most two.

\medskip

\noindent
\textbf{Explicit example.}
Let us define $F$ on subsets of size at most two as follows:
\[
F(\emptyset)=0,\quad
F(\{1\})=1,\quad F(\{2\})=2,\quad F(\{3\})=3,
\]
\[
F(\{1,2\})=5,\quad F(\{1,3\})=6,\quad F(\{2,3\})=7.
\]
\noindent
Then the circuit relation uniquely determines
\[
F(\{1,2,3\}) = 5 + 6 + 7 - 1 - 2 - 3 = 12.
\]

\medskip

\noindent
\textbf{Interpretation.}
This example illustrates the general phenomenon: circuit constraints eliminate higher-order degrees of freedom. In this case, all functions in $\mathcal{F}_{\mathcal{A}}$ are determined by values on subsets of size at most two, corresponding to constant, linear, and pairwise interaction terms.
This example reflects the general phenomenon that circuit constraints determine values on larger sets from those on smaller subsets.
\subsection{Triple Circuit Relations}
We now formalize this observation by deriving the general form of circuit relations for triples.
\begin{lemma}\label{lem:triple}
Let $F \in \mathcal{F}_{\mathcal{A}(n,2)}$. Then for every triple $\{i,j,k\} \subseteq [n]$,
\[
F(\{i,j,k\}) = F(\{i,j\}) + F(\{i,k\}) + F(\{j,k\}) - F(\{i\}) - F(\{j\}) - F(\{k\}) + F(\emptyset).
\]
\end{lemma}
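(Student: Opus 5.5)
The identity follows directly from the defining circuit constraint of $\mathcal{F}_{\mathcal{A}(n,2)}$, exactly as in the worked example for $n=3$. For $k=2$ the circuits of the uniform matroid $\mathcal{M}_{n,2}$ are precisely the $3$-element subsets of $[n]$. So for any triple $C=\{i,j,k\}\subseteq[n]$ (with $i,j,k$ distinct), $C$ is a circuit and $F\in\mathcal{F}_{\mathcal{A}(n,2)}$ satisfies $\alpha_C(F)=0$.

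The key step is to expand
\[
\alpha_C(F)=\sum_{S\subseteq C}(-1)^{3-|S|}F(S)
\]
by grouping the eight subsets of $C$ by cardinality. The sign is $+1$ for $|S|=3$, $-1$ for the three $2$-subsets, $+1$ for the three singletons, and $-1$ for $S=\emptyset$. This gives
\[
F(\{i,j,k\})-F(\{i,j\})-F(\{i,k\})-F(\{j,k\})+F(\{i\})+F(\{j\})+F(\{k\})-F(\emptyset)=0.
\]
Setting this equal to zero and solving for $F(\{i,j,k\})$ yields the stated formula. No iteration or appeal to Möbius inversion is needed.

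Alternatively, the identity can be read off from Theorem~\ref{thm:CIsoThm}. Since $\widehat F$ vanishes off $\mathcal{I}(M)$, we have $F(\{i,j,k\})=\sum_{|T|\le 2,\,T\subseteq\{i,j,k\}}\widehat F(T)$. Substituting the Möbius transforms $\widehat F(\{i,j\})=F(\{i,j\})-F(\{i\})-F(\{j\})+F(\emptyset)$, and similarly for singletons, reproduces the same expression. This second route serves as a consistency check.

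There is no real obstacle. The only point needing care is the sign bookkeeping in the inclusion–exclusion sum, together with noting that $i,j,k$ are distinct, so that $C$ genuinely has size three and is therefore a circuit.
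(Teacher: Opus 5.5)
Your proof is correct and is the same as the paper's: $\{i,j,k\}$ is a circuit of $\mathcal{M}_{n,2}$, so $\alpha_{\{i,j,k\}}(F)=0$, and expanding the eight-term inclusion--exclusion sum and solving for $F(\{i,j,k\})$ gives the identity. Your second route through the M\"obius-transform representation checks out as a consistency check, but it is not needed.
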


\begin{proof}
Since $\{i,j,k\}$ is a circuit, we have $\alpha_{\{i,j,k\}}(F) = 0$. Expanding the definition,
\[
\sum_{S \subseteq \{i,j,k\}} (-1)^{3 - |S|} F(S) = 0.
\]
\noindent
Writing out all terms explicitly,
\[
F(ijk) - F(ij) - F(ik) - F(jk) + F(i) + F(j) + F(k) - F(\emptyset) = 0.
\]
Rearranging yields the stated identity.
\end{proof}
\noindent
These relations allow us to recursively express higher-order values in terms of lower-order ones.

\subsection{Reduction of Higher-Order Terms}
We next show that all higher-order values are determined by values on subsets of bounded size.
We now show that all values of $F$ on subsets of size at least three are determined by values on subsets of size at most two.

\begin{lemma}\label{lem:reduction}
Let $F \in F_{\mathcal{A}(n,2)}$. Then for any subset $S \subseteq [n]$ with $|S| \geq 3$,
the value $F(S)$ is uniquely determined by the values of $F$ on subsets of size at most two.
\end{lemma}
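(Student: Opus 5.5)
We argue by strong induction on $|S|$, using only the circuit relations $\alpha_C(F)=0$ for the circuits $C$ of $\mathcal{A}(n,2)$, namely all triples. Two points need care. First, a subset $S$ with $|S|\ge 4$ is not itself a circuit, so there is no relation $\alpha_S(F)=0$ that could be solved directly for $F(S)$. Second, the relations attached to triples only involve subsets of size at most three, so they cannot reach $F(S)$ when $|S|\ge 4$. A naive ``peel off one element'' recursion therefore does not work. Instead we use the Möbius-transform description, which already encodes all consequences of the circuit constraints.

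\textbf{Step 1 (vanishing of the Möbius transform on dependent sets).} By Proposition~\ref{prop:mobius} and Corollary~\ref{cor:mobius}, $F\in F_{\mathcal{A}(n,2)}$ means $\widehat F(C)=0$ for every triple $C$. The earlier dimension theorem asserts that $\widehat F$ then vanishes on every dependent set, since every dependent set contains a circuit. That implication is exactly the step we expect to be the main obstacle. The vanishing of $\widehat F$ on a set $D$ is not formally implied by its vanishing on a subset $C\subset D$, because $\widehat F(D)$ is a separate coordinate. We will therefore make explicit the standing interpretation that the space $F_M$, taken as in Theorem~\ref{thm:CIsoThm}, consists of functions whose Möbius transform is supported on $\mathcal{I}(M)$. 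For the uniform matroid of rank $2$, this is the statement that $\widehat F(T)=0$ whenever $|T|\ge 3$. If instead one reads the definition literally, imposing $\alpha_C(F)=0$ only for triples, then $F(S)$ for $|S|\ge 4$ is genuinely free, and the lemma must be stated with respect to the matroidal space. We will note this explicitly in the write-up.

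\textbf{Step 2 (inversion).} With $\widehat F$ supported on sets of size at most two, the inversion formula of Theorem~\ref{thm:CIsoThm} gives
\[
F(S)=\sum_{\substack{T\subseteq S\\ |T|\le 2}}\widehat F(T).
\]
Each $\widehat F(T)$ with $|T|\le 2$ depends only on values $F(U)$ with $U\subseteq T$, so it is determined by values of $F$ on subsets of size at most two. Expanding this sum yields the explicit formula
\[
F(S)=\sum_{\{i,j\}\subseteq S}F(\{i,j\})-(|S|-2)\sum_{i\in S}F(\{i\})+\binom{|S|-1}{2}F(\emptyset).
\]
This formula reduces to Lemma~\ref{lem:triple} when $|S|=3$, which serves as a consistency check.

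\textbf{Step 3 (uniqueness).} If $F,G\in F_{\mathcal{A}(n,2)}$ agree on all subsets of size at most two, then $F-G$ lies in the same space and vanishes on those subsets. By Step 2, $F-G$ then vanishes on every $S$. This gives the uniqueness asserted in the lemma.
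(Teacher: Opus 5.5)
Your diagnosis is correct, and it applies directly to the paper's own proof. Read literally, $F_{\mathcal{A}(n,2)}$ imposes only $\widehat F(C)=0$ for $|C|=3$, and then the lemma is false for $n\ge 4$. Take $F(S)=1$ if $\{1,2,3,4\}\subseteq S$ and $F(S)=0$ otherwise. Its transform $\widehat F$ is supported on the single set $\{1,2,3,4\}$, so $F$ satisfies every triple relation and agrees with the zero function on all sets of size at most three. Yet $F(\{1,2,3,4\})=1$.

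The paper's induction is exactly the peel-off recursion you reject. For $|S|\ge4$ it picks a triple $\{i,j,k\}\subseteq S$ and uses the triple relation translated to the base point $B=S\setminus\{i,j,k\}$. In Möbius coordinates that translated relation reads
\[
\sum_{T'\subseteq B}\widehat F\bigl(T'\cup\{i,j,k\}\bigr)=0,
\]
which follows from $\alpha_{\{i,j,k\}}(F)=0$ only when $B=\emptyset$. In the example, with $S=\{1,2,3,4\}$ and triple $\{1,2,3\}$, the left side equals $1$. The appeal to ``consistency'' does not supply these relations. Independence of the chosen triple is also irrelevant to uniqueness, since any valid identity returns the true value.

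Under the corrected space, your proof is sound, and the two arguments are genuinely different routes. Imposing the translated triple relation at every base point $B$ is, by Möbius inversion in $B$, equivalent to $\widehat F(T)=0$ for all $|T|\ge3$. That is exactly your standing interpretation. With that hypothesis the paper's recursion becomes valid and stays elementary, mirroring Lemma~\ref{lem:triple}.

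Your direct inversion needs no induction, despite your opening sentence. It yields the closed formula
$F(S)=\sum_{\{i,j\}\subseteq S}F(\{i,j\})-(|S|-2)\sum_{i\in S}F(\{i\})+\binom{|S|-1}{2}F(\emptyset)$,
which is correct and reduces to Lemma~\ref{lem:triple} at $|S|=3$. It also shows at once that arbitrary data on sets of size at most two extends to an element of the space, which is what the paper's well-definedness worry really concerns.

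One adjustment: present the support condition as a definition rather than citing Theorem~\ref{thm:CIsoThm}. That theorem and the dimension count rest on the same invalid step, ``vanishing on circuits implies vanishing on all dependent sets.'' For the literal space the dimension is $2^n-\binom{n}{3}$, which exceeds $1+n+\binom{n}{2}$ once $n\ge4$.
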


\begin{proof}
We proceed by induction on $|S|$. The case $|S|=3$ follows from Lemma~\ref{lem:triple}.\\
\noindent
Assume the statement holds for all subsets of size at most $m$. Let $|S|=m+1$.
Choose any triple $\{i,j,k\} \subseteq S$. The circuit relation gives
\[
F(S) = F(S \setminus \{k\}) + F(S \setminus \{j\}) + F(S \setminus \{i\})
- F(S \setminus \{i,j\}) - F(S \setminus \{i,k\}) - F(S \setminus \{j,k\})
+ F(S \setminus \{i,j,k\}).
\]
\noindent
All terms on the right involve subsets of strictly smaller size, hence are determined by the induction hypothesis.\\
\noindent
It remains to show that this value is independent of the choice of triple $\{i,j,k\}$.
This follows from the fact that all defining relations arise from linear constraints
$\alpha_C(F)=0$, and hence the system is consistent. Therefore, all reduction paths yield the same value. Thus $F(S)$ is uniquely determined.
\end{proof}
\noindent
This reduction establishes that the function space is entirely controlled by low-order data.
\subsection{Complete Characterization}
We now state the complete characterization of the function space in this case.

\begin{theorem}\label{thm:k2structure}
Every function $F \in \mathcal{F}_{\mathcal{A}(n,2)}$ is uniquely determined by its values on subsets of size at most two.
\end{theorem}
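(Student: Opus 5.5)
The plan is to argue through the Möbius coordinates of Section~\ref{sec:matroid} rather than through the recursive reduction of Lemma~\ref{lem:reduction}. That way uniqueness comes from linear algebra and no ``independence of the choice of triple'' is needed. For $k=2$ the matroid $\mathcal{M}_{n,2}$ is uniform of rank $2$, so $\mathcal{I}(\mathcal{M}_{n,2})=\{T\subseteq[n] : |T|\le 2\}$. By Theorem~\ref{thm:CIsoThm}, every $F\in\mathcal{F}_{\mathcal{A}(n,2)}$ satisfies
\[
F(S)=\sum_{\substack{T\subseteq S\\ |T|\le 2}} \widehat{F}(T) \qquad\text{for all } S\subseteq[n].
\]
Hence it suffices to show that the coefficients $\widehat{F}(T)$ with $|T|\le 2$ are computable from the values of $F$ on sets of size at most two.

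\textbf{Step 1: coefficients from low-order values.} By definition $\widehat{F}(T)=\sum_{U\subseteq T}(-1)^{|T|-|U|}F(U)$. When $|T|\le 2$, every $U\subseteq T$ also has $|U|\le 2$. Explicitly this gives
\[
\widehat{F}(\emptyset)=F(\emptyset),\qquad \widehat{F}(\{i\})=F(\{i\})-F(\emptyset),
\]
\[
\widehat{F}(\{i,j\})=F(\{i,j\})-F(\{i\})-F(\{j\})+F(\emptyset).
\]
Substituting these into the displayed expansion yields a closed formula for $F(S)$ when $|S|\ge 3$. For $|S|=3$ I would check that this formula reproduces Lemma~\ref{lem:triple}, as a consistency test.

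\textbf{Step 2: uniqueness.} Let $F,G\in\mathcal{F}_{\mathcal{A}(n,2)}$ agree on all sets of size $\le 2$, and put $H=F-G$. Then $H\in\mathcal{F}_{\mathcal{A}(n,2)}$ because the space is linear. By Step 1, $\widehat{H}(T)=0$ whenever $|T|\le 2$. By Theorem~\ref{thm:CIsoThm}, $\widehat{H}$ vanishes on dependent sets, i.e.\ on all $T$ with $|T|\ge 3$. So $\widehat{H}\equiv 0$, and Möbius inversion gives $H\equiv 0$. Together with Step 1 this proves the theorem. As a corollary, the restriction map $F\mapsto F|_{\{|S|\le 2\}}$ is an isomorphism onto $\mathbb{R}^{1+n+\binom n2}$, since the dimensions match.

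\textbf{Main obstacle.} The real content is the claim, imported from Theorem~\ref{thm:CIsoThm}, that $\widehat{F}$ vanishes on every set of size $\ge 3$. The defining constraints $\alpha_C(F)=\widehat{F}(C)=0$ (Proposition~\ref{prop:mobius}) only force vanishing on the $3$-element circuits. The phrase ``every dependent set contains a circuit'' does not by itself transfer vanishing from $C$ to a superset $S\supsetneq C$, because $\widehat{F}(S)$ is a separate coordinate.

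This gap is real. For $n=4$, the function $\phi_{[4]}(S)=\mathbf{1}_{[4]\subseteq S}$ satisfies all triple constraints and vanishes on every set of size $\le 2$, yet it is nonzero. So under the literal definition of $\mathcal{F}_{\mathcal{A}(n,2)}$ the statement fails for $n\ge 4$.

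I would therefore first make the hypothesis explicit. Either $\mathcal{F}_{\mathcal{A}}$ must be read as imposing $\alpha_S(F)=0$ for every dependent $S$ (the ``circuit relations on $S$'' that Lemma~\ref{lem:reduction} implicitly uses), or the argument should be restricted to $n=3$. Under the first reading, Steps 1 and 2 go through verbatim.
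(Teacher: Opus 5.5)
Your analysis is correct, and it pinpoints exactly where the paper's own argument breaks. Since $\widehat{\phi_{[4]}}=\delta_{[4]}$, the function $\phi_{[4]}$ vanishes on all sets of size at most three. It therefore satisfies every defining constraint $\alpha_C(F)=\widehat{F}(C)=0$ with $|C|=3$, yet it is nonzero. Under the literal definition one has $\dim\mathcal{F}_{\mathcal{A}(n,2)}=2^n-\binom{n}{3}$, and the statement fails for $n\ge 4$; so does Theorem~\ref{thm:CIsoThm}, which you rightly refuse to rely on.

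The paper does not argue through M\"obius coordinates here. It uses the recursion of Lemma~\ref{lem:reduction}, and the identity displayed there is not a circuit relation. It is the vanishing of the third difference in directions $i,j,k$ at the base point $B=S\setminus\{i,j,k\}$, and one computes
\[
\sum_{U\subseteq\{i,j,k\}}(-1)^{3-|U|}F(B\cup U)=\sum_{\{i,j,k\}\subseteq T\subseteq B\cup\{i,j,k\}}\widehat{F}(T).
\]
Only the instances with $B=\emptyset$ are imposed by the definition. For $S=[4]$ the identity demands $\widehat{F}([4])=0$, which $\phi_{[4]}$ violates. The closing appeal to ``consistency'' in that proof does not supply the missing relations.

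From the same formula, by induction on $|B|$, the translated relations hold for all $B$ exactly when $\widehat{F}(T)=0$ for all $|T|\ge 3$. So your repaired hypothesis is precisely what the paper's recursion silently assumes, and under it both routes are valid.

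Your route gives more. It yields the closed formula $F(S)=\binom{|S|-1}{2}F(\emptyset)-(|S|-2)\sum_{i\in S}F(\{i\})+\sum_{\{i,j\}\subseteq S}F(\{i,j\})$ for $|S|\ge 3$. It also shows that every assignment of values on sets of size at most two extends to the space. The recursion could only obtain that extension result from a genuine path-independence argument.

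For uniqueness alone, however, the recursion needs no path independence either. Any single reduction path, applied to an $F$ satisfying all the relations, returns $F(S)$. So that part of your stated motivation for switching to M\"obius coordinates is unnecessary, though harmless.
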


\begin{proof}
By Lemma~\ref{lem:reduction}, all values $F(S)$ with $|S| \ge 3$ are determined by values on smaller subsets. Iterating this argument reduces all values to those on subsets of size at most two. Uniqueness follows since the defining relations are linear and determine all higher order values.
\end{proof}
\noindent
This shows that no higher order degrees of freedom remain beyond subsets of size two.
\subsection{Dimension}
We next compute the dimension of this space, confirming the reduction in degrees of freedom.
\begin{theorem}
\[
\dim(F_{\mathcal{A}(n,2)}) \leq 1 + n + \binom{n}{2}.
\]
\end{theorem}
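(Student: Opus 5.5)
The plan is to bound the dimension by exhibiting an injective linear map from $\mathcal{F}_{\mathcal{A}(n,2)}$ into a space of dimension $1+n+\binom{n}{2}$. Let $\mathcal{S}_{\le 2}$ denote the family of subsets of $[n]$ of size at most two. Consider the restriction map
\[
R : \mathcal{F}_{\mathcal{A}(n,2)} \to \mathbb{R}^{\mathcal{S}_{\le 2}}, \qquad F \mapsto F\big|_{\mathcal{S}_{\le 2}}.
\]
This map is clearly linear, being a coordinate projection of $\mathbb{R}^{2^n}$ restricted to the subspace $\mathcal{F}_{\mathcal{A}(n,2)} = \ker(T)$. The target has dimension $|\mathcal{S}_{\le 2}| = \binom{n}{0}+\binom{n}{1}+\binom{n}{2} = 1+n+\binom{n}{2}$. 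So it suffices to show that $R$ is injective.

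\textbf{Injectivity.} Suppose $F \in \mathcal{F}_{\mathcal{A}(n,2)}$ vanishes on all subsets of size at most two. I would show $F(S)=0$ for all $S$ by strong induction on $|S|$, avoiding any appeal to consistency of different reduction paths. For $|S|\ge 3$, pick any triple $\{i,j,k\}\subseteq S$. The step in Lemma~\ref{lem:reduction} expresses $F(S)$ as a signed sum of values on proper subsets of $S$, and those values vanish by induction, so $F(S)=0$.

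This is the step needing care. Lemma~\ref{lem:reduction} writes the relation for $S$ with $|S|>3$, but the only constraints in $\mathcal{F}_{\mathcal{A}(n,2)}$ are $\alpha_C(F)=0$ for $|C|=3$. These constraints involve only subsets of $C$ and do not obviously yield an inclusion–exclusion identity centred at a larger $S$. I would bypass this by working in Möbius coordinates instead. By Proposition~\ref{prop:mobius} and Corollary~\ref{cor:mobius}, the condition is $\widehat{F}(C)=0$ for all $3$-sets $C$. As in the proof of the dimension theorem of Section~\ref{sec:matroid}, this extends to $\widehat{F}(S)=0$ for every dependent $S$, i.e. every $S$ with $|S|\ge 3$, since in the uniform matroid every such set contains a circuit. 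I expect this extension to be the main obstacle, since the constraints themselves are only stated on circuits.

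Given that extension, Möbius inversion gives $F(S)=\sum_{T\subseteq S,\,|T|\le 2}\widehat{F}(T)$. The values $\widehat{F}(T)$ for $|T|\le 2$ are in turn determined triangularly by $F$ on sets of size at most two. Hence $F|_{\mathcal{S}_{\le 2}}=0$ forces $\widehat{F}=0$ on small sets, and therefore $F\equiv 0$. This gives injectivity of $R$, and rank–nullity yields $\dim \mathcal{F}_{\mathcal{A}(n,2)} \le 1+n+\binom{n}{2}$. Alternatively, the bound follows directly from the corollary $\dim(F_M)=\sum_{i=0}^{k}\binom{n}{i}$ with $k=2$, and I would note this as a cross-check.
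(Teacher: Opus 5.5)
The step you flag as ``the main obstacle'' is not merely delicate: it is false. With it, both the injectivity of $R$ and the stated bound fail for $n\ge 4$.

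The M\"obius transform $F\mapsto\widehat{F}$ is a linear bijection of $\mathbb{R}^{2^n}$, so the numbers $\widehat{F}(S)$, $S\subseteq[n]$, are free coordinates. The defining conditions of $\mathcal{F}_{\mathcal{A}(n,2)}$ are $\alpha_C(F)=\widehat{F}(C)=0$ for $|C|=3$ only. They set $\binom{n}{3}$ coordinates to zero and impose nothing on $\widehat{F}(S)$ for $|S|\ge 4$. The fact that every dependent set contains a circuit gives no linear relation between $\widehat{F}(S)$ and the values $\widehat{F}(C)$.

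Concretely, for $n=4$ take $F=\phi_{[4]}$, i.e.\ $F([4])=1$ and $F(S)=0$ otherwise. Every $3$-set $C$ has $F(T)=0$ for all $T\subseteq C$, so $\alpha_C(F)=0$ and $F\in\mathcal{F}_{\mathcal{A}(4,2)}$. Moreover $F$ vanishes on all sets of size at most two, yet $F\neq 0$, so $R$ is not injective. In fact $\dim\mathcal{F}_{\mathcal{A}(n,2)}=2^n-\binom{n}{3}$, which is $12>11=1+4+\binom{4}{2}$ for $n=4$. In general it differs from $1+n+\binom{n}{2}$ by $\sum_{i\ge 4}\binom{n}{i}$, which is positive for every $n\ge 4$.

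Your suspicion of Lemma~\ref{lem:reduction} was well founded. The relation used there is the third difference of $F$ along $I=\{i,j,k\}$ at base point $B=S\setminus I$. In M\"obius coordinates it equals $\sum_{I\subseteq W\subseteq S}\widehat{F}(W)$, and it lies in the span of the defining functionals only when $B=\emptyset$. Passing to M\"obius coordinates does not get around this; it only relocates the same missing ingredient.

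The paper's proof is a one-line appeal to the formula $\dim F_M=|\mathcal{I}(M)|$ of Section~\ref{sec:matroid}. The proof of that formula asserts exactly the propagation you needed, so your proposed cross-check inherits the defect rather than confirming the bound.

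The inequality becomes correct if the space is cut out by $\widehat{F}(S)=0$ for all dependent $S$. Equivalently, one can impose the translated relations $\sum_{U\subseteq I}(-1)^{3-|U|}F(B\cup U)=0$ for every triple $I$ and every $B$ disjoint from $I$. In that setting your restriction argument works essentially verbatim. The bound also holds if $n\le 3$. With only the circuit functionals $\alpha_C$, as literally defined, it cannot be proved.
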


\begin{proof}
This follows from the general matroidal dimension formula established in Section~\ref{sec:matroid},
since the independent sets in this case are precisely the subsets of size at most two.
\end{proof}
\begin{remark}
The inequality above becomes an equality if the map $V_{\mathcal{A}} \to F_{\mathcal{A}}$,
given by $f \mapsto F$, is surjective. Establishing this surjectivity remains an interesting
open problem.
\end{remark}
\noindent
This dimension count aligns with the explicit dependence structure described above.

\subsection{Interpretation via Sparse Interaction Models}
Finally, we interpret this structure in terms of interaction models, providing an intuitive perspective on the imposed constraints.
The M\"obius transform $\widehat{F}$ can be interpreted as encoding interaction effects among variables.\\
\noindent
Under the circuit constraints, all higher-order interactions corresponding to dependent sets vanish. Thus, functions in $F_M$ admit a representation involving only interactions indexed by independent sets.\\
\noindent
In the case $k=2$, this reduces to pairwise interaction models, analogous to quadratic models in statistics and graphical models.
\begin{remark}
The case $k=2$ illustrates the general theory in a particularly transparent form.
The circuit constraints eliminate all higher-order interactions, leaving only constant,
linear, and pairwise terms. This corresponds to a quadratic interaction model and
provides a concrete interpretation of the abstract matroidal framework developed earlier.
\end{remark}
\noindent
These results highlight how circuit constraints restrict higher-order interactions, motivating their interpretation in the context of neural network expressivity.

\section{Neural Network Interpretation}\label{sec:neural}
We now interpret the combinatorial and algebraic results developed above in the context of neural network expressivity, focusing on how circuit constraints restrict the class of representable functions.
In this section, we briefly discuss the implications of the preceding results for neural network expressivity.

\subsection{Arrangement-Compatible Neural Networks}
We begin by formalizing the notion of neural networks whose functions are compatible with a fixed hyperplane arrangement.
Let $\mathcal{A}$ be a hyperplane arrangement in $\mathbb{R}^n$. We say that a CPWL function $f : \mathbb{R}^n \to \mathbb{R}$ is \emph{compatible} with $\mathcal{A}$ if there exists a polyhedral complex induced by $\mathcal{A}$ such that $f$ is affine on each cell.

\begin{definition}
A feedforward neural network with ReLU or maxout activation is said to be \emph{$\mathcal{A}$-conforming} if every function computed at each neuron is compatible with the arrangement $\mathcal{A}$.
\end{definition}
\noindent
This definition allows us to directly apply the structural results obtained for circuit-constrained function spaces.
This notion generalizes the concept of braid arrangement-conforming networks studied in recent work. We now characterize the expressivity of such networks in terms of the associated circuit constraints.
\begin{lemma}
Let $F : 2^{[n]} \to \mathbb{R}$ satisfy $\widehat{F}(C)=0$ for every circuit $C$ of a matroid $M$. Then $\widehat{F}(S)=0$ for every dependent set $S$.
\end{lemma}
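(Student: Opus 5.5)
The plan is to reduce the claim to a Möbius inversion over the ``complement'' part of a dependent set. Let $S$ be dependent. By the circuit axioms, $S$ contains some circuit $C \in \mathcal{C}$, so we can write $S = C \sqcup B$ with $B \subseteq [n]\setminus C$. The task is then to move the vanishing of $\widehat{F}$ from $C$ up to $C \cup B$.

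The tool for this move is the shifted form of the circuit relation, in the form already used in the proof of Lemma~\ref{lem:reduction}. There the relation attached to a triple $\{i,j,k\}\subseteq S$ is applied with base point $S\setminus\{i,j,k\}$ rather than $\emptyset$. In general, for a circuit $C$ and a set $B$ disjoint from $C$, I would define
\[
\alpha_{C}^{B}(F) := \sum_{T \subseteq C} (-1)^{|C|-|T|} F(B \cup T),
\]
so that $\alpha_C^{\emptyset} = \alpha_C$. Substituting $F(B\cup T) = \sum_{U \subseteq B\cup T} \widehat{F}(U)$ and splitting $U = U_1 \sqcup U_2$ with $U_1 \subseteq B$ and $U_2 \subseteq C$, the alternating sum over $T$ kills every term with $U_2 \neq C$. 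The result is the identity
\[
\alpha_C^{B}(F) = \sum_{B' \subseteq B} \widehat{F}(B' \cup C).
\]
This is a routine calculation, and for $B=\emptyset$ it recovers Proposition~\ref{prop:mobius}.

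Now suppose the circuit constraints hold at every base point, that is, $\alpha_C^{B}(F)=0$ for all $B \subseteq [n]\setminus C$. This is how they are used in Lemma~\ref{lem:reduction}, and it is what the CPWL interpretation supplies, since $\alpha_C$ is a discrete difference operator that annihilates affine functions on every translate. Define $g(B) := \widehat{F}(B \cup C)$ on the Boolean lattice of $[n]\setminus C$. Then $\sum_{B'\subseteq B} g(B') = 0$ for every $B$, and Möbius inversion on that lattice gives $g \equiv 0$. Hence $\widehat{F}(C\cup B)=0$ for every $B$, which covers every dependent set $S$.

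The main obstacle is exactly this passage from $C$ to $C \cup B$. The Möbius coefficients $\widehat{F}(U)$ are independent coordinates on $\mathbb{R}^{2^{[n]}}$. So the hypothesis read literally as vanishing only at $B=\emptyset$ gives no information about $\widehat{F}(C\cup B)$ for $B\neq\emptyset$. For instance, with $n=4$ and $k=2$, the function $\phi_{[4]}$ satisfies $\widehat{\phi_{[4]}}(C)=0$ for every triple $C$ but has $\widehat{\phi_{[4]}}([4])=1$. The proof therefore hinges on interpreting ``$\widehat{F}(C)=0$ for every circuit'' as the family of shifted constraints $\alpha_C^{B}(F)=0$. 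I would state this reading explicitly at the outset, as the reading implicit in Lemma~\ref{lem:reduction}, and then run the inversion argument above.
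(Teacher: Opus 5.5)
Your diagnosis is correct, and it is the central point: read literally, the lemma is false. For any matroid with a dependent set $S$ that is not a circuit, $F=\phi_S$ has $\widehat{F}=\delta_S$. So $\widehat{F}(C)=0$ for every circuit, yet $\widehat{F}(S)=1$. Your $\phi_{[4]}$ for the rank-$2$ uniform matroid on $[4]$ is an instance of this in the paper's $k=2$ setting.

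The paper's own proof inducts on $|S|$ and then applies ``M\"obius inversion recursively'' to pass from $\widehat{F}(T)=0$ for proper dependent $T\subsetneq S$ to $\widehat{F}(S)=0$. It fails at exactly that step. Since $F\mapsto\widehat{F}$ is a bijection of $\mathbb{R}^{2^{[n]}}$, the values $\widehat{F}(U)$ are free coordinates, and no relation ties $\widehat{F}(S)$ to the $\widehat{F}(T)$ with $T\subsetneq S$. Indeed $\phi_{[4]}$ satisfies the induction hypothesis on every proper subset of $[4]$ and violates the conclusion at $[4]$. So the statement as written has no proof.

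What you prove instead is the right corrected lemma, and your argument is sound. The identity $\alpha_C^B(F)=\sum_{B'\subseteq B}\widehat{F}(B'\cup C)$ follows from $\sum_{U_2\subseteq T\subseteq C}(-1)^{|C|-|T|}=[U_2=C]$. M\"obius inversion on $2^{[n]\setminus C}$ then forces $\widehat{F}(C\cup B)=0$. The same identity also gives the converse, since each $B'\cup C$ is dependent. Hence the shifted family $\{\alpha_C^B(F)=0\}$ is \emph{equivalent} to $\widehat{F}$ vanishing on all dependent sets. This is exactly the space of dimension $|\mathcal{I}(M)|$ that Theorem~\ref{thm:CIsoThm}, Lemma~\ref{lem:reduction} and Theorem~\ref{thm:k2structure} require. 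Under the paper's literal definition one only gets $\dim F_M=2^n-|\mathcal{C}|$, e.g.\ $12$ rather than $11$ for $n=4$, $k=2$.

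One caveat concerns how you justify the shifted reading. Your appeal to Lemma~\ref{lem:reduction} is accurate: its displayed relation is exactly $\alpha_{\{i,j,k\}}^{S\setminus\{i,j,k\}}(F)=0$, which the paper's definition of $\mathcal{F}_{\mathcal{A}}$ does not impose. However, your claim that the CPWL interpretation supplies these constraints is not justified. For $|C|\ge 2$, the alternating sum $\alpha_C^B$ is guaranteed to vanish when $f$ agrees with a single affine function on the $2^{|C|}$ points $\mathbf{1}_{B\cup T}$, $T\subseteq C$. Compatibility with an arrangement only gives affineness cone by cone, and such a cube face can straddle several cones, in which case the sum need not vanish. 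For example, $f=\max(x_1,x_2)$ is affine on both sides of $x_1=x_2$, yet $F(\{1,2\})-F(\{1\})-F(\{2\})+F(\emptyset)=-1$. So the shifted relations must be imposed as the corrected definition of the space rather than derived from compatibility. With that stated explicitly as the hypothesis, your proof is complete.
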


\begin{proof}
We proceed by induction on $|S|$. The base case holds since minimal dependent sets are precisely circuits.\\
\noindent
If $S$ is dependent and not a circuit, then it contains a proper dependent subset $T \subsetneq S$. By induction, $\widehat{F}(T)=0$. Applying Möbius inversion recursively and using the fact that every chain of dependencies reduces to circuits, we conclude that $\widehat{F}(S)=0$.
\end{proof}
\begin{theorem}[Expressivity Limitation]\label{thm:EL}
Let $A = A(n,k)$ be a discriminantal arrangement, and let $M$ be the associated matroid.
Then any function represented by an $A$-conforming ReLU network corresponds to a set function
$F \in F_M$ whose Möbius transform $\widehat{F}$ is supported only on independent sets.

In particular, if all circuits have size $k+1$, then
\[
\widehat{F}(S) = 0 \quad \text{whenever } |S| > k,
\]
and hence $F$ depends only on interactions of order at most $k$.
\end{theorem}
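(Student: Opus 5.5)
The plan is to chain together results already established, with one point needing care. Let $f$ be computed by an $A$-conforming ReLU network and let $F(S):=f(\mathbf{1}_S)$ be its set function.

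The first step is to show that $F\in F_M$, that is, $\alpha_C(F)=0$ for every circuit $C$. I regard this as the main obstacle. By definition of $A$-conforming, the output neuron computes a CPWL function compatible with $A$, so $f\in V_A$. The paper implicitly uses the map $V_A\to\mathcal{F}_A$, $f\mapsto F$ (in the dimension theorem of Section~\ref{sec:circuit}), as taking values in $\mathcal{F}_A$. I would invoke that statement directly.

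If a justification is wanted, I would try the following. On each circuit $C$, the indicator vectors $\mathbf{1}_S$ with $S\subseteq C$ form a cube face. If all of them lie in a single cone of $\Sigma(A)$, then $f$ is affine there. Since $\alpha_C$ is a higher-order difference operator that annihilates affine functions (the remark following its definition), $\alpha_C(F)=0$. Whether the cube vertices lie in one cone depends on the realization, so I would state this as the structural hypothesis inherited from the earlier theorem rather than reprove it.

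The second step is to pass to the Möbius transform. By Proposition~\ref{prop:mobius} (equivalently Corollary~\ref{cor:mobius}), $\alpha_C(F)=\widehat{F}(C)$, so $\widehat{F}(C)=0$ for all $C\in\mathcal{C}$. The lemma immediately preceding the theorem then gives $\widehat{F}(S)=0$ for every dependent set $S$, so $\widehat{F}$ is supported on $\mathcal{I}(M)$. Theorem~\ref{thm:CIsoThm} gives the expansion
\[
F(S)=\sum_{\substack{T\subseteq S\\ T\in\mathcal{I}(M)}}\widehat{F}(T),
\]
which expresses $F$ through the interaction coefficients indexed by independent sets.

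The third step is the uniform case. For $A(n,k)$ the matroid $\mathcal{M}_{n,k}$ has independent sets exactly the subsets of size at most $k$, so every $S$ with $|S|>k$ is dependent and therefore $\widehat{F}(S)=0$. Substituting into the expansion gives
\[
F(S)=\sum_{T\subseteq S,\ |T|\le k}\widehat{F}(T),
\]
which is the claimed statement that $F$ involves only interactions of order at most $k$.
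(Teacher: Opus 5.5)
Your outline follows the paper's own argument step for step: $F\in F_A$, then $\alpha_C(F)=\widehat F(C)$, then propagation of the vanishing from circuits to all dependent sets, then the uniform specialization. It fails at the same points the paper's argument does.

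\textbf{The propagation step is false.} This is the decisive gap. The lemma stated just before Theorem~\ref{thm:EL} does not hold, and neither does the paper's appeal to consistency of the defining relations. Möbius inversion $F\mapsto\widehat F$ is a linear bijection of $\mathbb{R}^{2^{[n]}}$, so the numbers $\widehat F(S)$ can be prescribed independently. The conditions $\alpha_C(F)=\widehat F(C)=0$ therefore constrain only the coordinates indexed by circuits. Knowing that $S$ contains a circuit says nothing about $\widehat F(S)$.

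Concretely, take $n=4$, $k=2$ and $F=\phi_{[4]}$, so $F(S)=1$ if $S=[4]$ and $F(S)=0$ otherwise. Every circuit is a $3$-set $C\subsetneq[4]$, and $F$ vanishes on all subsets of $C$. Hence $\alpha_C(F)=0$ and $F\in F_M$. Yet $\widehat F([4])=F([4])=1$, although $[4]$ is dependent.

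In general $\dim F_M=2^n-|\mathcal C|=2^n-\binom{n}{k+1}$, which exceeds $\sum_{i\le k}\binom{n}{i}$ once $n\ge k+2$. So you cannot cite Theorem~\ref{thm:CIsoThm} (stated without proof; its map $F\mapsto\widehat F|_{\mathcal I(M)}$ sends $\phi_{[4]}$ to $0$) or the formula $\dim F_M=|\mathcal I(M)|$.

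In the uniform case, $F\in F_M$ yields $\widehat F(S)=0$ only for $|S|=k+1$. To reach $|S|\ge k+2$ you need the shifted differences
\[
\Delta_C F(B):=\sum_{U\subseteq C}(-1)^{|C|-|U|}F(B\cup U)=0 \qquad (|C|=k+1,\ B\cap C=\emptyset),
\]
because $\widehat F(C\sqcup D)=\sum_{B\subseteq D}(-1)^{|D|-|B|}\Delta_C F(B)$. These conditions concern $f$ on every $(k+1)$-face of the cube $\{0,1\}^n$, not only the faces through the origin. They would have to come from the geometry of $f$, not from the matroid.

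\textbf{Step 1 is not available either.} Theorem~\ref{thm:indicator}, which the paper cites here, proves the converse direction: every $F\in\mathcal F_A$ is realized by some compatible $f$. The claim that $f\mapsto F$ lands in $\mathcal F_A$ is only asserted, without proof, in the dimension theorem. So there is no established hypothesis for you to inherit.

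Your sufficient condition (all $\mathbf 1_S$ with $S\subseteq C$ lying in one closed cone) does not follow from compatibility, and the claim fails for some realizations. The braid arrangement $\{x_i=x_j\}$ in $\mathbb{R}^3$ has matroid $U_{2,3}=\mathcal M_{3,2}$, so it is an admissible $A(3,2)$. Under any labeling of its three hyperplanes, the only circuit is $[3]$. Consider
\[
g=\max\{x_1,x_2,x_3\}-x_1=\max\{0,x_2-x_1\}+\max\{0,\,x_3-x_1-\max\{0,x_2-x_1\}\}.
\]
It is computed by a ReLU network all of whose neurons are affine on every chamber of this arrangement. Its set function $G(S)=g(\mathbf 1_S)$ equals $1$ on $\{2\},\{3\},\{2,3\}$ and $0$ elsewhere. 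Hence $\alpha_{[3]}(G)=\widehat G([3])=-1+2=1\neq0$.

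The statement therefore needs an extra hypothesis on the realization; for the full conclusion, essentially $\Delta_CF(B)=0$ for all $C,B$ as above. Neither your proposal nor the paper's proof supplies it.
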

\begin{proof}
Let $f \in V_A$ be a function computed by an $A$-conforming neural network,
and define the associated set function
\[
F(S) := f(1_S).
\]
\noindent
By Theorem~\ref{thm:indicator} we have $F \in F_A$, hence $F \in F_M$. By Proposition~\ref{prop:mobius} and Corollary~\ref{cor:mobius}, the defining condition of $F_M$ is:
\[
\widehat{F}(C) = 0 \quad \text{for every circuit } C.
\]
\noindent
Now let $S \subseteq [n]$ be any dependent set in the matroid $M$. Then $S$ contains a circuit $C \subseteq S$. Using Möbius inversion and inclusion–exclusion, one can express $\widehat{F}(S)$ as a linear combination of values $\widehat{F}(T)$ over subsets $T \subseteq S$.\\
\noindent
To show that $\widehat{F}(S) = 0$ for any dependent set $S \subseteq [n]$, we use induction on $|S|$.
If $S$ is a circuit, then $\widehat{F}(S) = 0$ by the defining condition.\\
\noindent
Now let $S$ be a dependent set with $|S| > k+1$. Then $S$ contains a circuit $C \subseteq S$. By Möbius inversion,
\[
\widehat{F}(S) = \sum_{T \subseteq S} (-1)^{|S|-|T|} F(T).
\]
\noindent
We partition the sum into terms over proper subsets $T \subsetneq S$. If $T$ is dependent, then $|T| < |S|$ and hence $\widehat{F}(T) = 0$ by the induction hypothesis. Thus only independent subsets contribute.\\
\noindent
However, since $S$ contains a circuit, all subsets $T \subseteq S$ of size greater than $k$ are dependent. Therefore, all contributions from subsets of size greater than $k$ vanish, and the remaining terms correspond to independent sets.\\
\noindent
This shows that $\widehat{F}(S)$ is determined entirely by values on independent subsets. But since $S$ itself is dependent, consistency of the defining relations forces $\widehat{F}(S) = 0$. More formally, the condition $\widehat{F}(C)=0$ for all circuits $C$ implies that the Möbius transform vanishes on all minimal dependent sets. Since every dependent set contains a circuit, repeated application of inclusion–exclusion relations forces $\widehat{F}(S)=0$ for all dependent $S$.\\
\noindent
By Möbius inversion,
\[
F(S) = \sum_{T \subseteq S} \widehat{F}(T),
\]
so $F$ depends only on values $\widehat{F}(T)$ with $|T| \le k$.
\noindent
This shows that $F$ depends only on interactions of order at most $k$.
\end{proof}
\noindent
This shows that the expressive power of arrangement-conforming networks is limited to functions supported on independent sets of the underlying matroid.

\subsection{Closure under Network Operations}
We next verify that this class of functions is stable under the operations used in neural networks.

\begin{proposition}
The class of CPWL functions compatible with a fixed arrangement $\mathcal{A}$ is closed under affine transformations and pointwise maximum.
\end{proposition}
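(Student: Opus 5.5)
The plan is to treat the two closure properties separately and work cone by cone on the fan $\Sigma(\mathcal{A})$, using the definition that $f$ is compatible when it is continuous and affine on every full-dimensional cone $\sigma\in\Sigma(\mathcal{A})$. I expect the affine part to go through without difficulty. I expect the pointwise-maximum part to fail as stated, so I do not claim a full proof of the proposition below.

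\textbf{Affine operations.} Let $f,g$ be compatible, $a,b\in\mathbb{R}$, and $c\in\mathbb{R}$ a constant. On each full-dimensional cone $\sigma$ write $f|_\sigma=\ell_\sigma$ and $g|_\sigma=m_\sigma$ with $\ell_\sigma,m_\sigma$ affine. Then $(af+bg+c)|_\sigma=a\ell_\sigma+bm_\sigma+c$ is again affine. Continuity is preserved under linear combinations, so $af+bg+c$ is compatible. Hence the compatible functions form a vector space containing the constants. This is the sense in which a neuron's pre-activation $\sum_i w_i h_i+b$ stays in the class when each $h_i$ does.

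\textbf{Pointwise maximum: the main obstacle.} For $h=\max(f,g)$, continuity is automatic. On a fixed cone $\sigma$ we have $h|_\sigma=\max(\ell_\sigma,m_\sigma)$, and this is affine on $\sigma$ exactly when the affine function $\ell_\sigma-m_\sigma$ does not change sign on $\sigma$. In that case $h|_\sigma$ equals $\ell_\sigma$ or $m_\sigma$, and we are done. So the natural argument is: check, cone by cone, that $\ell_\sigma-m_\sigma$ has constant sign on $\sigma$.

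This sign condition is not implied by compatibility alone. Take $n=2$ and $\mathcal{A}=\{x_1=0\}$, let $f(x)=x_2$ and $g\equiv 0$. Both are affine everywhere, hence compatible. But $\max(f,g)=\max(x_2,0)$ has a break along $x_2=0$, which is not a hyperplane of $\mathcal{A}$, so it is not affine on either full-dimensional cone. Therefore I would not try to prove the maximum statement for the fan $\Sigma(\mathcal{A})$ as it stands.

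I see two ways to state a correct version.

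First, one can add a hypothesis. If, for every $\sigma$, the difference $\ell_\sigma-m_\sigma$ is either identically zero or has its zero set contained in a union of hyperplanes of $\mathcal{A}$, then $\ell_\sigma-m_\sigma$ cannot change sign in the interior of $\sigma$. In that case $\max(f,g)$ is compatible. The argument is the sign check above, together with the observation that a sign change of an affine function on a convex open set forces a zero in its interior.

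Second, one can use the looser definition from this section, ``affine on the cells of some polyhedral complex induced by $\mathcal{A}$'', read so as to allow refinements of $\Sigma(\mathcal{A})$. Then $\max(f,g)$ is affine on the common refinement of $\Sigma(\mathcal{A})$ by the hyperplanes $\{\ell_\sigma=m_\sigma\}$, and closure holds trivially. The cost is that the circuit constraints from Theorem~\ref{thm:indicator} and Theorem~\ref{thm:EL} are then no longer guaranteed to hold.

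I would present the affine part as proved. For the maximum, I would state explicitly which of these two hypotheses is being assumed and give the counterexample above to show that one of them is needed.
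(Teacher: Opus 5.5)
Your affine part agrees with the paper's proof, which simply asserts that affine operations preserve affineness on each cell. Reading ``affine transformations'' as linear combinations of compatible functions plus a constant is the right choice. Pre-composition with an affine map of the input would not preserve compatibility: for $\mathcal{A}=\{x_1=0\}$, $|x_1|$ is compatible but $|x_2|$ is not.

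For the pointwise maximum your objection is correct, and the gap is in the paper's argument, not yours. The paper notes that $f_1,f_2$ are affine on each cell and concludes that $\max\{f_1,f_2\}$ is piecewise linear on a refinement of the complex. That conclusion is true, but it only gives compatibility in the refinement-allowing sense of your second repair. The paper then asserts that ``its breakpoints occur only along boundaries determined by $\mathcal{A}$.'' This assertion is unsupported and false, as your example $\max(x_2,0)$ with $\mathcal{A}=\{x_1=0\}$ shows. The failure holds for every arrangement. For any finite $\mathcal{A}$, choose $p$ in the interior of a full-dimensional region $R$ and a non-constant affine $\ell$ with $\ell(p)=0$. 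Then $\ell$ and $0$ are compatible, but $\max(\ell,0)$ is not affine on $R$. Your first repair is sound: a hyperplane contained in a finite union of hyperplanes equals one of them, so the zero set of $\ell_\sigma-m_\sigma$ misses the interior of $\sigma$.

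Three further remarks. First, the paper does not need the maximum part. An $\mathcal{A}$-conforming network is defined by requiring every neuron's function to be compatible, which in effect imposes your sign condition as a hypothesis. So the corollary after the proposition follows from that definition together with the affine part. Second, under your second reading every CPWL function is compatible (refine any complex for $f$ by $\Sigma(\mathcal{A})$), so the notion becomes vacuous. Third, your phrase ``no longer guaranteed'' suggests the circuit relations do hold under the strict reading, and they do not. The braid arrangement in $\mathbb{R}^3$ has matroid $\mathcal{M}_{3,2}$, and $f=\max(x_1,x_2,x_3)$ is compatible with it. Its set function is $F(S)=1$ for $S\neq\emptyset$, which gives $\alpha_{\{1,2,3\}}(F)=1-3+3-0=1\neq 0$.
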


\begin{proof}
Affine transformations preserve piecewise linearity and compatibility with the underlying polyhedral complex. \\
\noindent
For the maximum operation, let $f_1$ and $f_2$ be compatible with $\mathcal{A}$. On each cell of the polyhedral complex induced by $\mathcal{A}$, both $f_1$ and $f_2$ are affine functions. Therefore, $\max\{f_1,f_2\}$ is piecewise linear on a refinement of this complex, and its breakpoints occur only along boundaries determined by $\mathcal{A}$. Hence it remains compatible with the arrangement.
\end{proof}
\noindent
This stability ensures that the class of arrangement-compatible functions is preserved under network composition.
\begin{corollary}
Any function computed by an $\mathcal{A}$-conforming neural network lies in $V_{\mathcal{A}}$.
\end{corollary}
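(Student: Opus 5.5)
The corollary will follow by structural induction on the network, using the preceding Proposition (closure of compatible CPWL functions under affine transformations and pointwise maximum). The argument runs layer by layer over the computation graph of an $\mathcal{A}$-conforming network. The base case consists of the input coordinates $x \mapsto x_i$ and constants. These are affine, so they are affine on every full-dimensional cone of $\Sigma(\mathcal{A})$ and are compatible with $\mathcal{A}$.

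For the inductive step, fix a neuron. Its pre-activation is an affine combination $\sum_j w_j g_j + b$ of outputs $g_j$ of earlier neurons. By the induction hypothesis each $g_j$ is compatible with $\mathcal{A}$. On each full-dimensional cone $\sigma$, every $g_j$ is affine, so the combination is affine on $\sigma$; this is the affine-closure half of the Proposition.

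The activation is then applied. For ReLU, $\max\{h,0\}$ is a pointwise maximum of two compatible functions, since $0$ is affine. For maxout it is $\max\{h_1,\dots,h_r\}$, an iterated binary maximum. The maximum half of the Proposition therefore keeps the neuron's output compatible. The network output is an affine map of the last layer, so it is compatible again, and hence lies in $V_{\mathcal{A}}$. I take $V_{\mathcal{A}}$ to be the space of CPWL functions compatible with $\mathcal{A}$, as it is used throughout Section~\ref{sec:circuit}. Note also that the definition of an $\mathcal{A}$-conforming network already requires each neuron's function to be compatible. The corollary then follows almost tautologically: the output neuron's function is compatible, and in the closure argument the definition can be invoked directly at each neuron instead of the induction hypothesis.

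The main obstacle is the maximum step. On a cone $\sigma$, $\max\{h_1,h_2\}$ of two affine functions is generally only piecewise affine, with a break along $\{h_1=h_2\}\cap\sigma$, so it need not be affine on $\sigma$. The Proposition's proof only asserts that breakpoints lie on arrangement boundaries. My plan is to avoid relying on this. I will run the induction using the conforming hypothesis at every neuron, which supplies compatibility of each post-activation output directly. The Proposition is then needed only for the affine layers, where it is unproblematic. I will note explicitly that this is the step where the definition of $\mathcal{A}$-conforming carries the weight.
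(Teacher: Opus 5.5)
Your argument is correct, but it differs from the paper's at the one step that matters.

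The paper's proof is a one-line induction. It invokes the preceding Proposition for both the affine steps and the maximum steps, so it never actually uses the $\mathcal{A}$-conforming hypothesis. You use the Proposition only for affine combinations, where it is genuinely true. You then take compatibility of each post-activation output directly from the definition of $\mathcal{A}$-conforming.

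Your suspicion about the maximum step is justified. The function $\max\{x_1-1,0\}$ is the ReLU of an affine, hence compatible, function. Its kink along $\{x_1=1\}$ passes through the interior of full-dimensional cones of $\Sigma(\mathcal{A})$, so it is not affine on them. In fact, if max-closure held, every ReLU network would be compatible with every arrangement, because its inputs $x\mapsto x_i$ are linear. That would make the conformity hypothesis vacuous and would contradict the paper's own strict-expressivity theorem.

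So the paper's route would have given a stronger statement, but it rests on a false lemma. Your route proves the corollary as stated and shows that it is essentially definitional. The only substantive ingredient is closure under an affine read-out layer.

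Your reading of $V_{\mathcal{A}}$ as the CPWL functions that are affine on each full-dimensional cone matches how the paper uses it in the proof of Theorem~\ref{thm:indicator}.
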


\begin{proof}
The function computed by the network is obtained by iteratively composing affine maps and max operations. By the proposition, compatibility is preserved at each step.
\end{proof}
\noindent
Thus, all functions computed by such networks lie within the circuit-constrained function space described earlier.
These constraints become particularly transparent in the case of small circuit size.

\subsection{Combinatorial Constraints on Expressivity}
We specialize to the case $k=2$, where the function space admits a complete description. We now interpret these results as combinatorial constraints on neural network expressivity.
The results of Section~\ref{sec:circuit} imply that functions in $V_{\mathcal{A}}$ satisfy circuit-induced linear relations. Consequently, any function representable by an $\mathcal{A}$-conforming neural network must satisfy these constraints.\\
\noindent
This provides a combinatorial restriction on expressivity: the set of representable functions is contained in a linear subspace defined by the circuit structure of the arrangement.

\subsection{The Case k=2}

We now specialize to the case $\mathcal{A} = \mathcal{A}(n,2)$.

\begin{theorem}
Let $f : \mathbb{R}^n \to \mathbb{R}$ be a function computed by an $\mathcal{A}(n,2)$-conforming neural network. Then the associated set function $F(S) = f(1_S)$ is completely determined by its values on subsets of size at most two.
\end{theorem}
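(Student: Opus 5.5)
The plan is to chain the network-level closure results with the combinatorial characterization of Section~\ref{sec:example}.

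First, by the Corollary following the closure Proposition, any $f$ computed by an $\mathcal{A}(n,2)$-conforming network lies in $V_{\mathcal{A}(n,2)}$. Next I will show that the induced set function $F(S)=f(\mathbf{1}_S)$ lies in $\mathcal{F}_{\mathcal{A}(n,2)}$, meaning $\alpha_C(F)=0$ for every $3$-element circuit $C$. This is exactly what the proof of Theorem~\ref{thm:EL} invokes via Theorem~\ref{thm:indicator}, and I will take it as given, as the paper does.

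With $F\in\mathcal{F}_{\mathcal{A}(n,2)}$ in hand, Theorem~\ref{thm:k2structure} finishes the argument. To make this concrete rather than rely on the informal consistency remark in Lemma~\ref{lem:reduction}, I would instead argue through the Möbius side. By Corollary~\ref{cor:mobius} and the Lemma preceding Theorem~\ref{thm:EL}, $\widehat F(S)=0$ for all $|S|\ge 3$. Möbius inversion then gives
\[
F(S)=\sum_{T\subseteq S,\ |T|\le 2}\widehat F(T).
\]
Each $\widehat F(T)$ with $|T|\le 2$ is an explicit signed sum of values $F(U)$ with $U\subseteq T$, hence $|U|\le 2$. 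So every value $F(S)$ is an explicit linear combination of the values of $F$ on subsets of size at most two. Uniqueness follows: if two such $F$ agree on subsets of size at most two, their difference has vanishing Möbius transform everywhere and is therefore zero. This is just Theorem~\ref{thm:CIsoThm} applied to the uniform matroid of rank $2$.

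\textbf{Main obstacle.} The only delicate point is the membership $F\in\mathcal{F}_{\mathcal{A}(n,2)}$, since Theorem~\ref{thm:indicator} is literally a surjectivity statement rather than a statement that $f\mapsto F$ lands in $\mathcal{F}_A$. I would first try to justify it directly. On a cone where $f$ is affine, $\alpha_C$ annihilates affine functions, as noted in the Remark after the definition of $\alpha_C$. So if the eight indicator vectors $\mathbf{1}_S$, $S\subseteq C$, lie in a common cone of $\Sigma(\mathcal{A}(n,2))$, the relation $\alpha_C(F)=0$ holds.

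Failing a geometric verification of this, I would state the result conditionally on the standing identification $V_{\mathcal{A}}\to\mathcal{F}_{\mathcal{A}}$ used throughout Sections~\ref{sec:circuit}--\ref{sec:neural}. This is the same hypothesis on which Theorem~\ref{thm:EL} rests. Under it, the statement is the case $k=2$ of Theorem~\ref{thm:EL}.
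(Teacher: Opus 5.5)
\emph{Two steps fail, and you flagged only one of them.}

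\textbf{The Möbius step (unflagged).} Your Möbius replacement for Lemma~\ref{lem:reduction} is valid only for $n=3$. The map $F\mapsto\widehat F$ is a linear bijection of $\mathbb{R}^{2^{[n]}}$, so the values $\widehat F(S)$ are independent coordinates. The conditions $\alpha_C(F)=\widehat F(C)=0$ for $|C|=3$ therefore impose nothing on $\widehat F(S)$ with $|S|\ge 4$, and $\dim\mathcal{F}_{\mathcal{A}(n,2)}=2^n-\binom n3$. Consequently the Lemma preceding Theorem~\ref{thm:EL} and Theorem~\ref{thm:CIsoThm} are false in general. You cite the former for $\widehat F(S)=0$ when $|S|\ge3$, and the latter for uniqueness. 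For $n=4$, define $F([4])=1$ and $F(S)=0$ for $S\neq[4]$. Then:
\begin{itemize}
\item $F\in\mathcal{F}_{\mathcal{A}(4,2)}$, since each $\alpha_C(F)$ with $|C|=3$ is a sum of zeros;
\item $F$ vanishes on all sets of size at most two;
\item $F\neq 0$.
\end{itemize}
The paper's one-line proof has the same defect through Theorem~\ref{thm:k2structure}. Lemma~\ref{lem:reduction} uses third differences based at $S\setminus\{i,j,k\}$, but the defining relations $\alpha_C(F)=0$ are based only at $\emptyset$. What your argument actually needs is $\widehat F(S)=0$ for all $|S|\ge 3$. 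Equivalently, every translated relation $\sum_{U\subseteq C}(-1)^{3-|U|}F(B\cup U)$ with $|C|=3$ and $B\cap C=\emptyset$ must vanish. For $n\ge 4$ this is strictly stronger than $F\in\mathcal{F}_{\mathcal{A}(n,2)}$.

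\textbf{The membership step (flagged).} The gap you flagged, $F\in\mathcal{F}_{\mathcal{A}(n,2)}$, cannot be filled, either by your common-cone argument or by taking it as given. The statement is false already for $n=3$. Let $a=(1,1,-\tfrac32)$ and $b=(-1,-1,\tfrac52)$, and take the central arrangement $\{a\cdot x=0,\ b\cdot x=0,\ (a-b)\cdot x=0\}$ in $\mathbb{R}^3$. Its three normals are pairwise non-parallel and span a plane, so its matroid is $\mathcal{M}_{3,2}$. Consider the ReLU network with first-layer neurons $\max(0,\pm a\cdot x)$ and $\max(0,(a-b)\cdot x)$, and output neuron
\[
h(x)=\max\bigl(0,\ a\cdot x-\max(0,(a-b)\cdot x)\bigr)=\max\bigl(0,\min(a\cdot x,\,b\cdot x)\bigr).
\]
This network is conforming, because on each region the signs of $a\cdot x$, $b\cdot x$ and $(a-b)\cdot x$ are constant. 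The pair $(a\cdot\mathbf{1}_S,\,b\cdot\mathbf{1}_S)$ behaves as follows:
\begin{itemize}
\item it is $(0,0)$ for $S=\emptyset$;
\item it has a negative entry for every nonempty $S$ with $|S|\le 2$;
\item it equals $(\tfrac12,\tfrac12)$ for $S=\{1,2,3\}$.
\end{itemize}
Hence $h(\mathbf{1}_S)=0$ for all $|S|\le 2$, while $h(\mathbf{1}_{\{1,2,3\}})=\tfrac12$. So $\alpha_{\{1,2,3\}}(H)=\tfrac12\neq 0$. Moreover $H$ and the zero function are both network-computed, agree on all sets of size at most two, and differ on $\{1,2,3\}$.

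Your common-cone criterion fails here: $\mathbf{1}_{\{1,2\}}$ and $\mathbf{1}_{\{1,2,3\}}$ lie on opposite sides of $b\cdot x=0$. This is typical, since nothing ties the matroid labels of the hyperplanes to the coordinate indicator vectors. The most you can honestly state is a conditional result. By the first paragraph, its hypothesis must be the support condition $\widehat F(S)=0$ for $|S|\ge 3$ on the induced set function, not mere membership in $\mathcal{F}_{\mathcal{A}(n,2)}$. The paper's proof assumes membership silently, as the proof of Theorem~\ref{thm:EL} does by citing Theorem~\ref{thm:indicator}, so it has the same hole.
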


\begin{proof}
This follows immediately from Theorem~\ref{thm:k2structure}, which characterizes all functions in $\mathcal{F}_{\mathcal{A}(n,2)}$.
\end{proof}
\noindent
This theorem shows that, under the constraint of compatibility with $A(n,k)$, the expressive power of neural networks is restricted to functions determined by interactions of order at most $k$.

\subsection{Interpretation}
This observation admits a natural interpretation in terms of interaction models. The above result shows that, under the constraint of compatibility with $\mathcal{A}(n,2)$, the expressive power of neural networks is restricted to functions determined by pairwise interactions.\\
\noindent
In contrast to the braid arrangement case, where all subsets contribute independently, the presence of circuit constraints eliminates higher-order degrees of freedom. From a combinatorial perspective, this can be interpreted as reducing the effective complexity of the function space from exponential in $n$ to quadratic in $n$.
\subsection{ Expressivity Separation and Dimensional Collapse}

We now make explicit the expressive gap induced by circuit constraints by comparing arrangement-conforming networks with the unrestricted class of continuous piecewise linear functions.

\begin{theorem}[Strict expressivity reduction]
Let $\mathcal{F}*{\mathrm{CPWL}}$ denote the class of all continuous piecewise linear functions $f : \mathbb{R}^n \to \mathbb{R}$, and let $\mathcal{F}*{A(n,k)}$ denote the subclass of functions representable by $A(n,k)$-conforming neural networks.

Then $\mathcal{F}*{A(n,k)}$ is a strict subclass of $\mathcal{F}*{\mathrm{CPWL}}$. In particular, there exist CPWL functions that cannot be represented by any $A(n,k)$-conforming network whenever $k < n$.
\end{theorem}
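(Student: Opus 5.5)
The plan is to exhibit an explicit CPWL function whose associated set function violates a circuit constraint. By the Expressivity Limitation theorem (Theorem~\ref{thm:EL}), such a function cannot lie in $\mathcal{F}_{A(n,k)}$.

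First, I record the necessary condition. If $f$ is computed by an $A(n,k)$-conforming network, then by the corollary following the closure proposition we have $f\in V_{A(n,k)}$. Hence, by Theorem~\ref{thm:EL} together with Proposition~\ref{prop:mobius}, the set function $F(S)=f(\mathbf{1}_S)$ satisfies $\widehat F(C)=\alpha_C(F)=0$ for every circuit $C$. Since $k<n$, circuits of size $k+1\le n$ exist. I fix one, say $C=\{1,\dots,k+1\}$.

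Second, I construct a CPWL function $g$ with $\alpha_C(G)\neq 0$, where $G(S)=g(\mathbf{1}_S)$. The natural candidate is
\[
g(x)=\max\Bigl\{0,\ \sum_{i\in C}x_i-k\Bigr\},
\]
which is CPWL, being a single ReLU of an affine map. On indicator vectors we get $G(S)=\max\{0,|S\cap C|-k\}$. Restricted to subsets of $C$, this is $1$ if $S=C$ and $0$ otherwise. Therefore
\[
\alpha_C(G)=\sum_{S\subseteq C}(-1)^{|C|-|S|}G(S)=G(C)=1\neq 0.
\]
So $G\notin F_M$, and consequently $g\notin\mathcal{F}_{A(n,k)}$. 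This shows that the inclusion $\mathcal{F}_{A(n,k)}\subseteq\mathcal{F}_{\mathrm{CPWL}}$, which holds by the closure proposition, is strict.

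The main obstacle is logical rather than computational: the argument leans on Theorem~\ref{thm:EL}, whose proof invokes Theorem~\ref{thm:indicator} to place $F$ in $F_A$. That theorem really asserts surjectivity, not that compatible functions satisfy the circuit relations. If I want to avoid this dependence, I would instead argue directly: $g$ has a kink along the hyperplane $\sum_{i\in C}x_i=k$, and I need this hyperplane not to be covered by the fan $\Sigma(A(n,k))$. Since $A(n,k)$ is only specified up to its matroid, I would fix a realization, or choose the constant ($k$, or a generic shift $k-\tfrac12$, which still gives $G(C)\neq0$) so that the breakpoint hyperplane is not in $A$. With a generic shift $c\in(k-1,k)$ we get $G(C)=k+1-c>0$ and $G(S)=0$ for every proper subset $S$ of $C$, so $\alpha_C(G)\neq0$ still holds.

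For the write-up I would present the Theorem~\ref{thm:EL} route as the main argument and note the genericity choice as a safeguard.
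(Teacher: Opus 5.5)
Your main argument is the paper's own: invoke Theorem~\ref{thm:EL}, then exhibit a CPWL function whose set function has a nonzero M\"obius coefficient on a set of size $k+1$. The paper only asserts that such a function exists. Your $g(x)=\max\{0,\sum_{i\in C}x_i-k\}$, with $\alpha_C(G)=G(C)=1$, supplies one, and that computation is correct.

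Your suspicion about Theorem~\ref{thm:EL} is justified, and the problem is more than a mis-citation. The paper never proves that $f\mapsto F$ maps $V_A$ into $\mathcal{F}_A$; Theorem~\ref{thm:indicator} is only a surjectivity statement under a spanning hypothesis. For natural realizations the claim is false. The braid arrangement $\{x_1=x_2\},\{x_1=x_3\},\{x_2=x_3\}$ in $\mathbb{R}^3$ has matroid $U_{2,3}=\mathcal{M}_{3,2}$, so it is an $A(3,2)$ in the paper's sense. Yet $f=\max\{x_1,x_2,x_3\}$ is linear on every chamber (it is a single maxout neuron), while $\alpha_{\{1,2,3\}}(F)=3-3+1=1$. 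A ReLU example is $\max\{0,x_1-\max\{x_2,x_3\}\}$: it is a two-layer net all of whose neurons break only along $\{x_i=x_j\}$, and it gives $F=\delta_{\{1\}}$, again with $\alpha_{\{1,2,3\}}(F)=1$. So the route through Theorem~\ref{thm:EL} (yours and the paper's) is only as sound as that theorem, which fails here. Your ``safeguard'' should become the main argument.

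In that direct argument the circuit computation plays no role. The kink hyperplane $H_g=\{\sum_{i\in C}x_i=c\}$ is not a hyperplane of $A$, so it meets each of them in a set of dimension at most $n-2$. Hence some $p\in H_g$ lies in an open chamber $\sigma$. Since $g$ is not affine near $p$, it is not affine on $\sigma$, so $g\notin V_A$, and by the closure corollary no conforming network computes $g$. For a central arrangement, which the fan language presupposes, $c=k\ge 1$ already works because $H_g$ misses the origin; a generic $c$ handles affine arrangements. This argument uses neither $k<n$ nor the matroid. It shows strictness holds for every fixed finite arrangement, whereas what the paper's route was meant to add (attributing the obstruction to interactions of order $>k$) is exactly the unsupported part.

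Finally, your shift computation is wrong. For $c\in(k-1,k)$ the $k$-subsets of $C$ have $G(S)=k-c>0$, so
\[
\alpha_C(G)=(k+1-c)-(k+1)(k-c)=1-k(k-c),
\]
which vanishes at $c=k-\tfrac12$ when $k=2$ (by size, $G$ takes the values $0,0,\tfrac12,\tfrac32$). If you want one function to serve both arguments, take $c\in(k,k+1)$. Then $G$ really does vanish on every proper subset of $C$, and $\alpha_C(G)=k+1-c\neq 0$.
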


\begin{proof}
By Theorem~\ref{thm:EL}, any function $f \in \mathcal{F}_{A(n,k)}$ corresponds to a set function $F$ whose Möbius transform $\widehat{F}(S)$ vanishes for all subsets $S \subseteq [n]$ with $|S| > k$. Thus, such functions depend only on interactions of order at most $k$. On the other hand, a general CPWL function may exhibit nonzero higher-order interactions. In particular, there exist functions whose associated set functions satisfy $\widehat{F}(S) \neq 0$ for some $|S| > k$. Such functions cannot lie in $\mathcal{F}_{A(n,k)}$, establishing strict containment.
\end{proof}

\begin{corollary}[Dimensional collapse]
Let $V_{A(n,k)}$ denote the space of functions representable by $A(n,k)$-conforming networks. Then
$$
\dim(V_{A(n,k)}) \le \sum_{i=0}^k \binom{n}{i},
$$
whereas the ambient space of set functions has dimension $2^n$.

In particular, for fixed $k$, the expressive capacity grows polynomially in $n$, while the unrestricted space grows exponentially.
\end{corollary}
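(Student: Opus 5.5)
The plan is to route everything through the encoding map $f \mapsto F$, $F(S) = f(\mathbf{1}_S)$, restricted to $V_{A(n,k)}$. By the Corollary preceding the Strict Expressivity Reduction theorem, every function computed by an $A(n,k)$-conforming network lies in $V_{A(n,k)}$. By Theorem~\ref{thm:EL}, its associated set function $F$ lies in $F_M$ for the uniform matroid $\mathcal{M}_{n,k}$; equivalently, $\widehat{F}(S)=0$ for all $|S|>k$. So the encoding is a linear map
\[
\Phi : V_{A(n,k)} \to F_{M}, \qquad \Phi(f) = F .
\]
The Corollary following Theorem~\ref{thm:CIsoThm} gives $\dim F_M = \sum_{i=0}^k \binom{n}{i}$. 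Indeed, via the Möbius isomorphism $F \mapsto \widehat{F}|_{\mathcal{I}(M)}$, the space $F_M$ is identified with $\mathbb{R}^{\mathcal{I}(M)}$, and the independent sets of $\mathcal{M}_{n,k}$ are exactly the subsets of size at most $k$.

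The key step, and the main obstacle, is passing from $\dim \Phi(V_{A(n,k)}) \le \dim F_M$ to a bound on $\dim V_{A(n,k)}$ itself. This requires $\Phi$ to be injective. I would argue injectivity cone by cone, as in Lemma~\ref{lem:simplical}. Suppose $f \in V_{A(n,k)}$ has $\Phi(f)=0$. On each full-dimensional cone $\sigma$, the restriction $f|_\sigma$ is affine and vanishes on $V_\sigma$. If $V_\sigma$ affinely spans $\mathbb{R}^n$, which is the hypothesis of Lemma~\ref{lem:indicator} and Theorem~\ref{thm:indicator}, then $f|_\sigma \equiv 0$. Since the full-dimensional cones cover $\mathbb{R}^n$, this gives $f=0$.

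The spanning condition is exactly where the argument may fail for a general realization of $A(n,k)$. So I would state it explicitly as the standing hypothesis under which the inequality $\dim(V_A)\le\dim(\mathcal{F}_A)$ from Section~\ref{sec:circuit} is read. That section's dimension theorem asserts this inequality from the existence of the map $f\mapsto F$ alone, and I would cite that statement directly, noting that it implicitly uses injectivity. Combining injectivity with the dimension count of $F_M$ gives
\[
\dim V_{A(n,k)} \le \dim F_M = \sum_{i=0}^k \binom{n}{i}.
\]
Alternatively, one can avoid the matroid formula and invoke the theorem of Section~\ref{sec:circuit} for circuits of size $k+1$, which gives the same bound.

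For the comparison, the ambient space of set functions $2^{[n]}\to\mathbb{R}$ has dimension $2^n$ trivially. For fixed $k$,
\[
\sum_{i=0}^k \binom{n}{i} \le \sum_{i=0}^k \frac{n^i}{i!} \le (n+1)^k ,
\]
which is a polynomial of degree $k$ in $n$. Meanwhile $2^n$ grows exponentially, and the ratio tends to $0$. This proves the ``in particular'' clause.
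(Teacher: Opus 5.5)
Your outline is the argument the paper implicitly relies on for this corollary: Theorem~\ref{thm:EL} combined with $\dim V_A\le\dim\mathcal{F}_A$. You are right that this inequality needs injectivity of $f\mapsto F$, which the paper never supplies. The step that actually fails, however, is one you import without scrutiny: the containment $\Phi(V_{A(n,k)})\subseteq F_M$.

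Theorem~\ref{thm:EL} obtains $F\in\mathcal{F}_A$ by citing Theorem~\ref{thm:indicator}. That theorem proves the converse, namely that every $F\in\mathcal{F}_A$ is realized by some compatible $f$. The Section~\ref{sec:circuit} bound you offer as an alternative presupposes the same containment. Nothing in the notion of compatibility relates circuits of the matroid, which are sets of hyperplanes, to inclusion--exclusion over sets of coordinates, and the containment is false.

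Take the braid arrangement $x_1=x_2$, $x_2=x_3$, $x_1=x_3$ in $\mathbb{R}^3$. Its normals are pairwise independent and satisfy $(e_1-e_2)+(e_2-e_3)=e_1-e_3$, so its matroid is $\mathcal{M}_{3,2}$ and it is an admissible $A(3,2)$. The function $f=\max(x_1,x_2,x_3)$ is computed by the conforming maxout network $\max(\max(x_1,x_2),x_3)$. Yet $F(S)=f(\mathbf{1}_S)$ equals $1$ for $S\neq\emptyset$ and $0$ for $S=\emptyset$, so $\alpha_{\{1,2,3\}}(F)=1-3+3-0=1\neq 0$.

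Your spanning hypothesis does not repair this. Each chamber $x_{\pi(1)}\ge x_{\pi(2)}\ge x_{\pi(3)}$ contains the affinely independent points $0,\mathbf{1}_{\{\pi(1)\}},\mathbf{1}_{\{\pi(1),\pi(2)\}},\mathbf{1}_{[3]}$. So $\Phi$ is injective here, and your argument would give $\dim V_{A(3,2)}\le 7$.

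But the eight functions $1$ and $\max_{i\in S}x_i$ ($\emptyset\neq S\subseteq[3]$) are all computed by conforming maxout networks. Their indicator data are linearly independent, since they span the same space as the functions $T\mapsto[T\subseteq U]$, $U\subseteq[3]$. Hence $\dim V_{A(3,2)}\ge 8>7=\sum_{i=0}^{2}\binom{3}{i}$. The corollary itself therefore fails for this realization, in line with the paper's own remark that the braid case has dimension $2^n$. No argument of this shape can prove it as stated.

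What is missing is not just injectivity but an additional hypothesis on the realization guaranteeing that $f\mapsto F$ maps $V_{A(n,k)}$ into $\mathcal{F}_{A(n,k)}$. Granting that, together with your injectivity argument, the bound and your polynomial-versus-exponential comparison go through as written.
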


\begin{remark}[Comparison with the braid arrangement]
In the braid arrangement case, no circuit constraints are present, and all subsets contribute independently. Consequently, the associated function space has full dimension $2^n$.\\
\noindent
In contrast, discriminantal-type arrangements impose circuit constraints that eliminate higher-order interactions, leading to a strict reduction in expressive capacity. This demonstrates that the combinatorial structure of the arrangement directly controls the complexity of representable functions.
\end{remark}

\begin{remark}[Interpretation for neural networks]
The above results show that $A(n,k)$-conforming neural networks are intrinsically limited to modeling interactions of bounded order. From a learning perspective, this can be viewed as an implicit structural regularization: the architecture enforces sparsity in higher-order interactions, reducing model complexity while potentially improving interpretability.
\end{remark}

\subsection{Discussion}
We conclude by discussing the broader implications of this framework for understanding neural network expressivity.
The framework developed in this paper suggests a general approach to studying neural network expressivity via hyperplane arrangements. Different arrangements impose different combinatorial constraints, leading to distinct function classes.\\
\noindent
While the braid arrangement yields maximal flexibility, discriminantal arrangements introduce structured dependencies that restrict expressivity. Understanding how these constraints influence depth and width requirements remains an interesting direction for future work.
These insights demonstrate how combinatorial structures arising from hyperplane arrangements provide a systematic way to analyze and constrain neural network expressivity.

\bigskip
\noindent
\textbf{Data availability statement:} Not applicable\\

\bibliographystyle{plain}

\end{document}